\titleformat*{\section}{\LARGE\bfseries}
\titleformat*{\subsection}{\Large\bfseries}
\titleformat*{\subsubsection}{\large\bfseries}
\newtheoremstyle{case}{}{}{}{}{}{:}{ }{}
\theoremstyle{case}
\newcommand{\be}{\begin{equation}}
\newcommand{\ee}{\end{equation}}
\newcommand{\ben}{\begin{eqnarray*}}
\newcommand{\een}{\end{eqnarray*}}
\newtheorem{examp}{\sc Example}
\newtheorem{remk}{\sc Remark}
\newtheorem{corol}{\sc Corollary}
\newtheorem{lemma}{\sc lemma}
\newtheorem{theorem}{\sc theorem}
\newtheorem{defn}{\sc definition}
\newcommand{\bt}{\begin{theorem}}
\newcommand{\et}{\end{theorem}}
\newcommand{\bl}{\begin{lemma}}
\newcommand{\el}{\end{lemma}}
\newcommand{\bed}{\begin{defn}}
\newcommand{\eed}{\end{defn}}
\newcommand{\brem}{\begin{remk}}
\newcommand{\erem}{\end{remk}}
\newcommand{\bex}{\begin{examp}}
\newcommand{\eex}{\end{examp}}
\newcommand{\bcl}{\begin{corol}}
\newcommand{\ecl}{\end{corol}}
\newcommand{\NI}{\noindent}
\newcommand{\vsp}{\vskip 0.5em}
\newcommand{\lam}{t}
\theoremstyle{definition}
\theoremstyle{remark}
\numberwithin{equation}{section}
\numberwithin{theorem}{section}
\numberwithin{lemma}{section}
\begin{document}
\title{More on homotopy continuation method and discounted zero-sum stochastic game with ARAT structure}
\author{ A. Dutta$^{a, 1}$ and A. K. Das$^{b, 2}$\\
\emph{\small $^{a}$Department of Mathematics, Jadavpur University, Kolkata, 700 032, India}\\	
\emph{\small $^{b}$SQC \& OR Unit, Indian Statistical Institute, Kolkata, 700 108, India}\\
\emph{\small $^{1}$Email: aritradutta001@gmail.com}\\
\emph{\small $^{2}$Email: akdas@isical.ac.in} \\
 }
\date{}
\maketitle

\date{}
\maketitle
\begin{abstract}
	 \NI In this paper, we introduce a homotopy function to trace the trajectory by applying modified homotopy continuation method for finding the solution of two-person zero-sum discounted stochastic ARAT game. We show that the algorithm has the higher order of convergence. For the proposed algorithm, the homotopy path approaching the solution is smooth and bounded.  Two numerical examples are illustrated to show the effectiveness of the proposed algorithm. \\
\NI{\bf Keywords:} Two-person zero-sum stochastic game, discounted ARAT stochastic game, homotopy method, optimal value, optimal stationary strategy. \\

\NI{\bf AMS subject classifications:} 91A05, 91A15, 90C33, 90C30,  14F35.
\end{abstract}
\footnotetext[1] {Corresponding author}

\section{Introduction}

In this paper, we consider  two-person  zero-sum discounted stochastic ARAT game. Shapley \cite{shapley1953stochastic} introduced stochastic game and showed that there exist an optimal value and optimal stationary strategies for a  stochastic game with discounted payoff, which depends only on
the current state and not on the history. There are many applications of stochastic games like search problems, military applications, advertising problems, the traveling inspector model, and various 
economic applications. For details see \cite{filar2012competitive}.
There are significant research on theoretical as well as  computational aspects of stochastic games. For details see \cite{sinha1989contribution}, \cite{raghavan1985stochastic}, \cite{raghavan1991stochastic},\cite{sobel1971noncooperative},\cite{solan2015stochastic}.
 Raghaban et al. \cite{raghavan1985stochastic} studied ARAT(Additive Rewards Additive Transition) games and showed that  for a $\beta$-discounted zero-sum ARAT game, the value exists and both players have stationary optimal strategies, which may also be taken as pure strategies. 
 A stochastic game is said to be an Additive Reward Additive Transition
game (ARAT game) if the reward and and the transition probabilities satisfy\\ 

\NI (i) $ r(s, i, j) = r^1 _i (s)+ r^2 _j (s)$ for  $i \in A_s, j \in B_s, s \in S.$\\
\NI(ii) $p_{ij} (s, s') = p^1_i(s,s')+p^2_j(s,s')$ for  $i \in A_s, j \in B_s, (s,s') \in S \times S.$\\

We denote the matrix $((p^1_i(s,s'), s, s' \in S, i \in A_s)))$ as $P_1(s)$ where $S$ is the set of states. This is a $m_1(s) \times d$ matrix where $m_1(s)$ is the cardinality of $A_s$ and $d$ is the cardinality of $s$. Similarly the matrix $P_2(s)$ of order $m_2(s) \times d$ is defined where $m_2(s)$ denotes the cardinality of the set $B_s$.
The Shapley equations for state $s, s' \in S$ can be stated as\\
\begin{center}
\NI$\,\,\,\,\,\,\,\text{Val} [r(s, i, j) + \beta \sum_{s'}p_{ij} (s, s')v_\beta (s')]= v_\beta(s).$
\end{center}
This implies for player I: For any fixed $j$

\begin{equation}\label{inn1}
     r(s, i, j) + \beta \sum_{s'}p_{ij} (s, s')v_\beta(s') \leq v_\beta(s) \ \ \forall \ i.
         \end{equation}
     
\vsp
 
 \NI For playeer II: For any fixed $i$
 
 \begin{equation}\label{inn2}
    r(s, i, j) + \beta \sum_{s'}p_{ij} (s, s')v_\beta(s') \geq v_\beta(s) \ \ \forall \ j 
    \end{equation}
    
  Various approaches have been proposed  for solving different
classes of stochastic games. One such approach is to formulate the ARAT game as complementarity problem. The well-known Lemke's algorithm solves LCPs when the underlying matrix class belongs to a particular class. Cottle and Dantzig\cite{cottle1967complementary} extended Lemke’s algorithm to VLCPs. The processability of  Lemke’s algorithm and Cottle-Dantzig’s algorithm is restricted on some classes of matrices. For details see \cite{eaves1971linear},\cite{garcia1973some}. One sufficient condition for Lemke-processibility and Cottle-Dantzig processibility is that the underlying matrix  should be both $E_0$ and $R_0$ matrix \cite{kostreva1976direct},\cite{murty1988linear},\cite{pang1995complementarity}.

The concept of a class of globally convergent methods\cite{watson1989globally}, known as the homotopy continuation method is used to  prove the existence of solutions to many economic and engineering problems such as systems of nonlinear equations\cite{eaves1972homotopies}, nonlinear optimization problems, fixed point problems, nonlinear programming, game problem and complementarity problems \cite{watson1989modern}. In this paper, we introduce a homotopy function to solve discounted zero sum stochastic ARAT game based on the modified homotopy continuation method  and  establish the higher order global convergence of the homotopy method. 
\vsp 

\vsp
The paper is organized as follows. Section 2 presents some basic notations and results which will be used in the next section. In section 3, we propose a new homotopy function to find the solution of discounted zero sum stochastic ARAT game. We show that the proposed homotopy function possesses a smooth and bounded homotopy path  to find the solution as the homotopy parameter $t$ tends to $0$. To find the solution of homotopy function we modify predictor corrector steps to increase the order of convergency of the algorithm.  We also find the sign of the positive tangent direction of the homotopy path. Finally, in section 4, we illustrate  numerical examples of ARAT stochastic games to present the effectiveness of the introduced homotopy function. 

\section{Preliminaries}
\noindent We begin by introducing some basic notations used in this paper. We consider matrices and vectors with real entries. $R^n$  denotes the $n$ dimensional real space, $R^n_+$ and $R^{n}_{++}$ denote the nonnegative and positive orthant of $R^n.$ We consider vectors and matrices with real entries. Any vector $x\in R^{n}$ is a column vector and  $x^{T}$ denotes the row transpose of $x.$ $e$ denotes the vector of all $1.$ If $A$ is a matrix of order $n,$ $\alpha \subseteq \{1, 2, \cdots, n\}$ and $\bar{\alpha} \subseteq \{1, 2, \cdots, n\} \setminus \alpha$ then $A_{\alpha \bar{\alpha}}$ denotes the submatrix of $A$ consisting of only the rows and columns of $A$ whose indices are in $\alpha$ and $\bar{\alpha}$ respectively. $A_{\alpha \alpha}$ is called a principal submatrix of A and det$(A_{\alpha \alpha})$ is called a principal minor of $A.$ In this paper we consider the followings:
\begin{center}
$\mathcal{R}=\{x\in R^n:x>0,Ax+q>0\}$ \\ 
$\mathcal{\bar{R}}=\{x \in R^n:x\geq 0, Ax+q \geq 0\}$\\
$\mathcal{R}_1=\mathcal{R} \times R_{++}^n \times R_{++}^n$ \\
$\mathcal{\bar{R}}_1=\mathcal{\bar{R}} \times R_{+}^n \times R_{+}^n.$\\ $\partial{\mathcal{R}_1}$ denotes the boundary of $\bar{\mathcal{R}_1}.$ \\
\end{center}
 \subsection{Linear Complementarity Problem and its Generalization}
The linear complementarity problem \cite{neogy2005principal} is defined as follows: Given square matrix $A\in R^{n\times n}$ and a vector $\,q\,\in\,R^{n},\,$ the linear complementarity problem is to find $w \in R^n$ and $x \in R^n$ such that
\begin{equation}\label{1}
w - Ax = q, w \geq 0, \, x \geq 0,
\end{equation}
\begin{equation} \label{2}
x^tw = 0.
\end{equation}
This problem is denoted as LCP$(q, A).$
Several applications of linear complementarity problems are reported in operations research \cite{pang1995complementarity}, multiple objective programming problems \cite{kostreva1993linear}, mathematical economics and engineering. For details see \cite{ferris1997engineering}, \cite{mohan2001more},  \cite{jana2019hidden} and \cite{jana2021more}. The linear complementarity problem is well studied in the literature of mathematical programming and arises in a number of applications in operations research, control theory, mathematical economics, geometry and engineering. For recent works on this problem and applications see \cite{das2017finiteness}, \cite{article12}, \cite{article11} and \cite{article03} and references therein. In complementarity theory several matrix classes are considered due to the study of theoretical properties, applications and its solution methods. For details see \cite{jana2019hidden}, \cite{jana2021more}, \cite{article1}, \cite{mohan2001more}, \cite{neogy2013weak} and \cite{neogy2005almost} and references cited therein. The problem of computing the value vector and optimal stationary strategies for structured stochastic games is formulated as a linear complementary problem for discounted and undiscounded zero-sum games. For details see \cite{mondal2016discounted}, \cite{neogy2008mixture} and \cite{neogy2005linear}.
\vsp
\subsubsection{Vertical Linear Complementarity Problems}
Cottle and Dantzig \cite{cottle1970generalization} extended the linear complementarity problem to a problem in which the matrix $A$ is not a square matrix. The generalization of the linear complementarity
problem introduced by them is given below: Consider a vertical block matrix $A\in R^{m \times k}(m\geq k),$  $A=\left[\begin{array}{c} 
A_1\\
A_2\\
A_3\\ 
\vdots\\
A_k
	\end{array}\right]$ such that  $A_j \in R^{m_j\times k}, 1\leq j \leq k, \sum_{j=1}^k m_j=m.$ This matrix is called vertical block matrix  of type $(m_1,m_2,\cdots m_k)$ and consider $q\in R^m$ where $m=\sum_{j=1}^{k}m_j,$ the generalized linear complementarity problem is to find $w \in R^m$ and $x \in R^k$ such that 

\begin{equation}\label{3}
w-Ax=q, w\geq 0, x \geq 0,
\end{equation}
\begin{equation} \label{4}
x_j\prod_{i}^{m_j}{w^i}_j,   j=1,2,\cdots k.
\end{equation}
This generalization is  known as vertical linear complementarity problem and denoted by VLCP$(q, A)$.  For further details see \cite{cottle1970generalization}.
The vertical block matrix arises naturally in the literature of stochastic games
where the states are represented by the columns and actions in each state are represented by rows in a particular block. For details see \cite{mohan2001pivoting}, \cite{mohan1996generalized}.

An equivalent square matrix $M$ can be constructed from a vertical block matrix $A$ of type $(m_1,..., m_k )$ by copying $A._{j}, m_j$ times for $j=1,2,\cdots,k.$ Therefore $M._{p} = A._{j}$ $ \forall  p \in J_j$ . LCP$(q, M)$ is called as equivalent LCP of VLCP$(q, A)$. For more details see \cite{mohan1996generalized}, \cite{neogy2012generalized}.
 Mohan et al. \cite{mohan1996generalized} proposed techniques to convert
a VLCP to an LCP and also showed that processibility conditions as well.
 Mohan et al.\cite{mohan1999vertical} formulated zero-sum discounted Additive Reward Additive Transition (ARAT) games as a VLCP.
\begin{defn}
 \cite{mohan1999vertical} $A$ is said to be a vertical block $E(d)$-matrix for some $d > 0$ if VLCP$(d, A)$ has a unique solution $w = d, z = 0.$
\end{defn}
 \begin{defn}
  \cite{mohan1999vertical} A is said to be a vertical block $R_0$-matrix if VLCP$(0, A)$ has a unique solution $w = 0, z = 0.$
 \end{defn}
 We denote the class of vertical block $E(d)$ matrices as VBE$(d)$ the class of vertical block $R_0$ matrices by VB$R_0$.
 
 \subsection{Discounted Stochastic Game with the Structure of Additive Reward and Additive Transition }
Consider a state space $S = \{1, 2, \cdots , N\}$. For each $s \in S,$ consider the finite action sets $A_s = \{1, 2, . . . , m_s\}$ for Player I and $B_s = \{1, 2, . . . , n_s\}$ for Player II. For state $s\in S$ a reward law $R(s) = [r(s, i, j)]$ is an $m_s \times n_s$ matrix whose $(i, j)$th entry is  the payoff from Player II to Player I when Player I chooses an action $i \in A_s$ and player II chooses an action $j \in B_s,$ while the game is being played in state $s$ and the payoff from player I to player II is $-r(s,i,j).$ Let $p_{ij} (s, s')$ denotes the probability of a transition from state $s$ to state $s',$ given that Player I and Player II choose actions $i \in A_s, j \in B_s$ respectively. Then transition law is defined by
\begin{center}
    $p = (p_{ij} (s, s') : (s, s') \in S \times S, i \in A_s, j \in B_s).$ 
\end{center}
Let the game be played in stages $t = 0, 1, 2,\cdots.$ At some stage $t$, the players find themselves in a state $s \in S$ and independently choose actions $i \in A_s, j \in B_s.$ Player II pays Player I an amount $r(s, i, j)$ and at stage $(t + 1)$, the new state is $s'$ with probability $p_{ij} (s, s')$. Play continues at this new state. The players guide the game via strategies and in general, strategies can depend on complete histories of the game until the current stage. We are however concerned with the simpler class of stationary strategies which depend only on the current state $s$ and not on stages. So for Player I, a stationary strategy
$k \in K_s = \{k_i(s) | s \in S, i \in A_s, k_i(s) \geq 0, \sum_{i\in A_s}k_i(s) = 1\}$ indicates that the action $i \in A_s$ should be chosen by Player I with probability $k_i(s)$ when the game is in state $s$.\\
Similarly for Player II, a stationary strategy $l \in L_s = \{l_j (s) | s \in S, j \in B_s,$ \ $l_j (s)\geq 0, \sum_{j\in B_s}l_j (s) = 1\}$ indicates that the action $j \in B_s$ should be chosen with probability $l_j (s)$ when the game is in state $s$. Here $K_s$ and $L_s$ will denote the set of all stationary strategies for Player I and Player II respectively. Let $k(s)$ and $l(s)$ be the corresponding $m_s$ and $n_s$ dimensional vectors respectively.
 Fixed stationary strategies $k$ and $l$ induce a Markov chain on $S$ with
transition matrix $P(k, l)$ whose $(s, s')$th entry is given by \\
\begin{center}
 $P_{ss'} (k, l) = \sum_{i\in A_s} \sum_{j\in B_s}p_{ij} (s, s')k_i(s)l_j (s)$
\end{center}
and the expected current reward vector $r(k, l)$ has entries defined by\\
\begin{center}
    $r_s(k, l) = \sum_{i\in A_s}\sum_{j\in B_s}r(s, i, j)k_i(s)l_j (s) = k^t(s)R(s)l(s)$ .
    \end{center}
With fixed general strategies $k, l$ and an initial state $s$, the stream of
expected payoff to Player I at stage $t$, denoted by $v^t _s(k, l), t = 0, 1, 2,\cdots $ is well defined and the resulting discounted  payoff is 
$\phi^\beta _s(k, l)=\sum_{0}^{\infty}\beta^t v^t _s(k, l)$ for a $\beta \in (0,1),$ where $\beta$ is the discount factor.  Due to this additive property assumed on the transition and reward functions, the game is called $\beta$-discounted zero-sum ARAT(Additive Reward Additive Transition) game. For futther details see \cite{goeree1999stochastic}, \cite{raghavan1985stochastic}, \cite{flesch2007stochastic}.
 \vsp
\begin{theorem}
  \cite{filar2012competitive}For ARAT stochastic games\\
(i) Both players possess $\beta$ discounted optimal stationary strategies that are pure.\\
(ii) These strategies are optimal for the average reward criterion as well.\\
(iii) The ordered field property holds for the discounted as well as the average reward
criterion.
 \end{theorem}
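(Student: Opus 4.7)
The plan is to exploit the ARAT decomposition to reduce the local matrix game at each state in the Shapley equation to a separable matrix game whose minimax is attained in pure strategies, and then to leverage the finiteness of pure stationary strategy pairs to transfer discounted optimality to the average-reward criterion and to deduce the ordered field property.

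First I would substitute the identities $r(s,i,j)=r^{1}_{i}(s)+r^{2}_{j}(s)$ and $p_{ij}(s,s')=p^{1}_{i}(s,s')+p^{2}_{j}(s,s')$ into the $(i,j)$-entry $r(s,i,j)+\beta\sum_{s'}p_{ij}(s,s')v_{\beta}(s')$ of the matrix whose value appears on the right-hand side of the Shapley equation. Setting
\begin{align*}
a_{i}(s)&=r^{1}_{i}(s)+\beta\sum_{s'}p^{1}_{i}(s,s')v_{\beta}(s'),\\
b_{j}(s)&=r^{2}_{j}(s)+\beta\sum_{s'}p^{2}_{j}(s,s')v_{\beta}(s'),
\end{align*}
the entry collapses to $a_{i}(s)+b_{j}(s)$. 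A matrix game with additively separable entries has value $\max_{i}a_{i}(s)+\min_{j}b_{j}(s)$, attained at any pair $(i^{*},j^{*})$ with $i^{*}\in\arg\max_{i}a_{i}(s)$ and $j^{*}\in\arg\min_{j}b_{j}(s)$. Assembling such pointwise optimal actions into stationary strategies $f,g$ produces pure policies that satisfy the Shapley inequalities \eqref{inn1} and \eqref{inn2}, which by Shapley's optimality criterion makes them discounted-optimal; this establishes (i). For the discounted half of (iii), once the pure pair $(f,g)$ is fixed the value $v_{\beta}$ is the unique solution of the linear system $v_{\beta}=r(f,g)+\beta P(f,g)v_{\beta}$, i.e.\ $v_{\beta}=(I-\beta P(f,g))^{-1}r(f,g)$, whose entries lie in the ordered field generated by the rewards, transition probabilities and $\beta$; since only finitely many pure pairs need to be compared, and comparisons are rational, $v_{\beta}$ and the optimal strategies lie in this field.

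For (ii) and the average-reward half of (iii), I would run a Blackwell-type argument. Because there are only finitely many pure stationary pairs and, for each, the discounted value is a rational function of $\beta$, the index sets $\arg\max_{i}a_{i}(s)$ and $\arg\min_{j}b_{j}(s)$ stabilize as $\beta\uparrow 1$; let $(f^{*},g^{*})$ be a pair that is discounted-optimal for all $\beta$ in a left neighborhood of $1$. Writing the Laurent expansion $(1-\beta)v_{\beta}=v+(1-\beta)h+O((1-\beta)^{2})$ and collecting leading-order terms of the Shapley equation, the separability of $a_{i}(s)+b_{j}(s)$ is inherited by the limiting gain-and-bias equations, so $(f^{*},g^{*})$ is average-reward optimal and the average value vector $v$ (being the constant term of a rational function of $\beta$) lies in the same ordered field. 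The main obstacle is this last step: one must verify that the separable pointwise optimization really does survive the passage to the limit through the gain-and-bias system rather than only the primary Shapley equation, which requires combining the stabilization of the argmax/argmin sets with a careful bookkeeping of the leading-order coefficients of the Laurent expansion.
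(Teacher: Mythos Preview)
The paper does not prove this theorem; it appears in the preliminaries as a quoted result from Filar and Vrieze \cite{filar2012competitive} and is used later only as input (the proof of Theorem~\ref{3.1} begins by \emph{assuming} pure optimal actions $i_0,j_0$ exist). So there is no ``paper's own proof'' to compare against.

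As to your sketch: part~(i) and the discounted half of~(iii) are the standard argument and are fine --- the key observation that the local game matrix at state $s$ has separable entries $a_i(s)+b_j(s)$, hence value $\max_i a_i(s)+\min_j b_j(s)$ attained in pure actions, is exactly what Raghavan et al.\ and Filar--Vrieze do. For~(ii) your Blackwell-type route (finitely many pure pairs, each giving a rational function of $\beta$, hence a fixed pair is optimal on a left neighborhood of $1$, then pass to Ces\`aro/average limits) is also the textbook approach; you have correctly flagged that the only real work is checking that the gain--bias optimality equations inherit the separable structure so that the same pure pair remains optimal in the limit. One caution: the statement ``these strategies are optimal for the average reward criterion as well'' should be read as the existence of a common pure pair (Blackwell optimal), not that an arbitrary $\beta$-discounted optimal pair is average-optimal for every $\beta$, and your argument proves the former, which is what is intended.
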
 
 Now we  observe the following property of the additive components $P_1$ and $P_2$ of the transition probability matrix $P.$ For details see \cite{mohan1999vertical}.
  \begin{lemma}
  If $p^2 _j(s,s') = 0$  for all $s' \in S$ and for some $j \in B(s)$, then $P_2(s) = 0.$
 \end{lemma}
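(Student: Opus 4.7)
The plan is to exploit the fact that the rows of the full transition matrix $p_{ij}(s,\cdot)$ must sum to $1$ for every admissible $(i,j)$, and combine this with the additive decomposition $p_{ij}(s,s')=p^1_i(s,s')+p^2_j(s,s')$ together with the nonnegativity of the components $p^2_j(s,s')$.

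First I would fix the particular action $j_0 \in B(s)$ for which the hypothesis says $p^2_{j_0}(s,s')=0$ for every $s'\in S$. For this $j_0$ the additive decomposition collapses to $p_{i,j_0}(s,s') = p^1_i(s,s')$ for every $i\in A_s$ and every $s'\in S$. Summing over $s'$ and using that $p_{i,j_0}(s,\cdot)$ is a probability distribution, I obtain
\begin{equation*}
\sum_{s'\in S} p^1_i(s,s') \;=\; \sum_{s'\in S} p_{i,j_0}(s,s') \;=\; 1 \qquad \text{for every } i\in A_s.
\end{equation*}

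Next I would pick an arbitrary $j\in B_s$ and apply the additive decomposition once more. Summing the identity $p_{ij}(s,s')=p^1_i(s,s')+p^2_j(s,s')$ over $s'$ and again using that the row of the transition matrix sums to $1$, I get
\begin{equation*}
1 \;=\; \sum_{s'\in S} p_{ij}(s,s') \;=\; \sum_{s'\in S} p^1_i(s,s') + \sum_{s'\in S} p^2_j(s,s') \;=\; 1+\sum_{s'\in S} p^2_j(s,s'),
\end{equation*}
so $\sum_{s'\in S} p^2_j(s,s') = 0$ for every $j\in B_s$. Since each $p^2_j(s,s')$ is a nonnegative entry in the decomposition of a probability matrix, a sum of nonnegatives being $0$ forces each term to be $0$. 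Hence $p^2_j(s,s')=0$ for all $j\in B_s$ and all $s'\in S$, which is precisely $P_2(s)=0$.

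The argument is essentially a row-sum bookkeeping, so there is no real obstacle; the only subtle point to justify carefully is the nonnegativity of the components in the ARAT decomposition, which is used in the very last step to pass from the vanishing of a row sum to the vanishing of every entry. I would include a one-line remark on why this nonnegativity is assumed implicitly in the ARAT model (otherwise the additive splitting is not unique and the lemma can fail).
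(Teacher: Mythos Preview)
Your argument is correct and is the standard proof of this fact. Note, however, that the paper does not actually supply a proof of this lemma: it is stated in the preliminaries with a reference to \cite{mohan1999vertical} for details, so there is no in-paper proof to compare against. Your row-sum bookkeeping is precisely the argument one finds in that source, and your caveat about nonnegativity of the components $p^2_j(s,s')$ is well placed---the ARAT convention is that $P_1(s)$ and $P_2(s)$ are nonnegative (sub-stochastic) matrices, which is exactly what you need in the final step.
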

 \begin{theorem}
  \cite{mohan1999vertical}Consider the vertical block matrix $A$ arising from the zero-sum ARAT game. Then $A \in$ VB$E(e)$ where $e$ is the vector each of whose entries is $1.$
 \end{theorem}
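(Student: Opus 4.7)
The plan is to verify that $(w,z)=(e,0)$ is a solution of VLCP$(e,A)$ and then to prove that it is the only one, which is exactly the content of the definition of VB$E(e)$.

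\textbf{Existence.} With $z=0$ the defining equation $w-Az=e$ reduces to $w=e$, which is componentwise strictly positive, so both nonnegativity requirements $w\geq 0$ and $z\geq 0$ hold; the vertical complementarity condition $z_j\prod_{i=1}^{m_j}w_j^i=0$ is then trivial because $z_j=0$ for every $j$. Hence $(e,0)$ solves VLCP$(e,A)$.

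\textbf{Uniqueness.} Suppose $(\bar w,\bar z)$ is any other solution with $\bar z\neq 0$. Choose an index $j^{*}$ with $\bar z_{j^{*}}>0$; the vertical complementarity condition then forces at least one entry $\bar w_{j^{*}}^{i^{*}}$ in the $j^{*}$-th block of $\bar w$ to vanish, which via $\bar w=A\bar z+e$ amounts to $(A\bar z)_{j^{*}}^{i^{*}}=-1$. The ARAT-to-VLCP construction of Mohan et al.\ \cite{mohan1999vertical} gives $A$ the explicit form whose rows encode Shapley's inequalities \eqref{inn1}--\eqref{inn2} for each state--action pair, with column $s'$ carrying a coefficient of the shape $\delta_{ss'}-\beta(p^1_i(s,s')+p^2_j(s,s'))$. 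Because $\beta\in(0,1)$ and $P_1(s)+P_2(s)$ is row-stochastic for every $s$, each such row of $A$ has row sum exactly $1-\beta>0$. Pairing this row-sum positivity with the feasibility $\bar w\geq 0$ and with $\bar z\geq 0$, $\bar z\neq 0$, I would then derive a contradiction with the forced equality $(A\bar z)_{j^{*}}^{i^{*}}=-1$, concluding that $\bar z=0$ and hence $\bar w=e$.

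\textbf{Main obstacle.} The technical heart of the argument is turning the row-sum identity into a clean contradiction with the existence of a zero entry in a block of $\bar w$ where the corresponding $\bar z$-coordinate is positive. A natural route is to take the inner product of the equation $\bar w=A\bar z+e$ with a suitably chosen nonnegative weighting that isolates the offending row, so that the left-hand side contains $\bar w_{j^{*}}^{i^{*}}=0$ and the right-hand side simplifies, by the row-sum identity, to a combination of $(1-\beta)$ times a positive quantity and additional nonnegative terms that strictly exceed zero. The ARAT additivity $p_{ij}(s,s')=p^1_i(s,s')+p^2_j(s,s')$, together with the normalization of the $P_\ell(s)$, is what makes this bookkeeping tractable, since it decouples the contributions of the two players across the blocks of $A$ and allows the row-sum argument to be carried out uniformly over all state--action rows, yielding $A\in$ VB$E(e)$ as claimed.
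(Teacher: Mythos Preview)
The paper states this theorem without proof, citing \cite{mohan1999vertical}, so there is no in-paper argument to compare against. Your existence part is fine, but the uniqueness sketch rests on a structural misreading of the matrix and an incorrect row-sum claim.

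You describe the rows of $A$ as carrying coefficients $\delta_{ss'}-\beta\bigl(p^1_i(s,s')+p^2_j(s,s')\bigr)$, i.e.\ Shapley rows with columns indexed by the values $v_\beta(s')$. That is not the VLCP matrix arising here. In the formulation used in this paper (see the derivation in Theorem~\ref{3.1}), the columns of $\mathcal A$ are indexed by the \emph{split} variables $\eta_\beta(s')$ and $\xi_\beta(s')$, and
\[
\mathcal A=\begin{pmatrix}-\beta P_1 & E-\beta P_1\\ -E+\beta P_2 & \beta P_2\end{pmatrix}.
\]
Writing $\alpha(s)=\sum_{s'}p^1_i(s,s')$ (which is independent of $i$ since $\alpha_i(s)+\gamma_j(s)=1$ for all $i,j$), a Player~I row $(s,i)$ has row sum $1-2\beta\alpha(s)$, while a Player~II row $(s,j)$ has row sum $2\beta(1-\alpha(s))-1$. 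These are \emph{not} equal to $1-\beta$, and the Player~II row sums are negative whenever $\beta(1-\alpha(s))<\tfrac12$ (in particular for all $\beta\leq \tfrac12$). So the ``row-sum positivity'' you invoke is false for the matrix in question, and the contradiction you hope to extract from $(A\bar z)^{i^*}_{j^*}=-1$ does not follow.

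Beyond the row-sum issue, your ``Main obstacle'' paragraph is a plan rather than a proof: you never specify the nonnegative weighting, nor verify that the resulting inequality is strict. A correct uniqueness argument (as in \cite{mohan1999vertical}) has to exploit the coupling $v_\beta(s)=\eta_\beta(s)+\xi_\beta(s)$ between the two families of split variables and the complementarity pattern across the Player~I and Player~II blocks jointly; a one-row, one-block inspection of the type you outline is not enough.
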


\begin{theorem}
\cite{mohan1999vertical} Consider the vertical block matrix $A$ arising from zero-sum ARAT game. Then $A \in $VB$R_0$ if either the condition $(a)$ or the set of conditions $(b)$ stated below is satisfied.\\
$(a)$ \ For each $s$ and each $j \in B_s, \ p^2 _j(s,s) > 0.$\\
$(b) \ \ (i)$ For each $s$, the matrix $P_1(s)$ does not contain any zero column and

$~(ii)$the matrix $P_2(s)$ is not a null matrix.
\end{theorem}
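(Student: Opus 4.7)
The plan is a proof by contradiction. Suppose $(w^*, z^*)$ solves VLCP$(0, A)$ with $z^* \neq 0$. I will derive a contradiction using either hypothesis (a) or (b).

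First, I would exploit the block complementarity $z^*_j \prod_{i=1}^{m_j} (w^*)^i_j = 0$ to extract, from each block $j$ with $z^*_j > 0$, a tight slack $(w^*)^{i_j}_j = 0$. In the VLCP formulation of the zero-sum ARAT game of Mohan et al.\ \cite{mohan1999vertical}, each such tight row is an instance of a Shapley inequality \eqref{inn1} or \eqref{inn2} for a particular action-state pair, turned into an equality by $q = 0$. These tight rows therefore pick out pure stationary strategies $\hat k$ and $\hat l$ of the two players on the support of $z^*$.

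Next, I would interpret the state-indexed part of $z^*$ as a value vector $v^*$. With $q = 0$, the tight rows read $v^*(s) = \beta \sum_{s'} \bigl[p^1_{\hat k(s)}(s, s') + p^2_{\hat l(s)}(s, s')\bigr] v^*(s')$ for every $s \in S$, equivalently $(I - \beta P(\hat k, \hat l)) v^* = 0$. Since $P(\hat k, \hat l)$ is stochastic and $\beta \in (0, 1)$, the spectral radius of $\beta P(\hat k, \hat l)$ is strictly less than $1$, so $I - \beta P(\hat k, \hat l)$ is invertible and $v^* = 0$.

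With $v^* = 0$, the remaining inequalities reduce to constraints driven only by $P_1(s)$ and $P_2(s)$. Under (a), the strict positivity $p^2_j(s,s) > 0$ means that a nonzero residual component $z^*_s$ would produce a strictly positive slack $(w^*)^j_s$, contradicting complementarity. Under (b), I would use (i) to argue that the no-zero-column property of $P_1(s)$ makes every state a positive-probability target of some action of Player I, and combine this with (ii) together with the preceding lemma (a single zero row of $P_2(s)$ collapses $P_2(s)$ to zero) to rule out any nonzero residual of $z^*$.

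The principal obstacle lies in the last step under (b). Case (a) is essentially immediate from positive diagonals and complementarity. Case (b) is more subtle: it requires tracking, across all states and both additive components, how the structural conditions on $P_1(s)$ and $P_2(s)$ jointly prevent any surviving nonzero component of $z^*$, and this is precisely where the preceding lemma on additive components becomes crucial.
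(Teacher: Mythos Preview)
The paper does not supply its own proof of this theorem: it is quoted verbatim from \cite{mohan1999vertical} as a preliminary result, with no argument given. There is therefore nothing in the present paper to compare your proposal against.

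That said, your sketch has a genuine gap worth flagging. In the ARAT VLCP the variable is $z=(\eta,\xi)$, not a single value vector $v$, and complementarity only yields a tight row in the $i$-block when $\eta(s)>0$ and a tight row in the $j$-block when $\xi(s)>0$. Your derivation of $(I-\beta P(\hat k,\hat l))v^*=0$ implicitly assumes that for every state $s$ both $\eta(s)>0$ and $\xi(s)>0$, so that the two tight rows can be added to produce $v(s)=\beta\sum_{s'}p_{\hat k(s)\hat l(s)}(s,s')v(s')$. If at some state only one of $\eta(s),\xi(s)$ is positive (or both vanish), you obtain only one equality, or none, and the system you wrote down does not follow. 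The actual argument in \cite{mohan1999vertical} has to handle this asymmetry explicitly, using the feasibility inequalities $w^1_i(s)\ge 0$ and $w^2_j(s)\ge 0$ at those states together with the structural hypotheses on $P_1(s)$ and $P_2(s)$; this is precisely where conditions (a) and (b) separately enter, rather than only in a clean-up step after $v^*=0$ has been established. Your treatment of case (a) is on the right track, but the sketch for case (b) remains vague about how the no-zero-column condition on $P_1(s)$ interacts with the feasibility inequalities when the support of $(\eta,\xi)$ is not full.
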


\subsection{Homotopy Continuation Method}
The key idea to solve a system of equations by the homotopy method is to solve  $H(x,t)=0,$ where $H:R^n\times [0,1] \to R^n, x\in R^n, t \in [0,1]$ is called homotopy parameter. The homotopy method aims to trace the entire path of equilibria in $H^{-1}=\{(x,t): H(x,t)=0\}$ by varying both $x$ and $t.$ A parametric path is obtained from a set of functions $(x(s),t(s))\in H^{-1}.$ When we move along the homotopy path, the auxiliary variable $s$ either decreases or increases monotonically. Differentiating $H(x(s),t(s))=0$ with respect to $s$ we obtain $\frac{\partial H}{\partial x}x'(s)+\frac{\partial H}{\partial t}t'(s)=0,$ where $\frac{\partial H}{\partial x}$ and $\frac{\partial H}{\partial t} $ are $n\times n$ Jacobian matrix of $H$ and $n\times 1$ column vector respectively. So this is a system of $n$ differential equations in $n+1$ unknowns ${x_i}'(s) \ \forall \ i$ and $t'(s).$ This system of differential equations has many solutions for which the solutions of the differential equations differ by monotone transformation of the auxiliary variable  $s.$
\vsp
Now we state some results on homotopy which will be required in the next section.
\begin{lemma} \cite{chow1978finding} \label{main}
Let $V \subset R^m$ be an open set and $g :R^m \to R^q$ be smooth. We say $y \in R^q$ is a regular value for $g$ if $\text{Range} \, Dg(x) = R^q $ $\forall x \in g^{-1}(y),$ where $Dg(x)$ denotes the $m \times q$ matrix of partial derivatives of $g(x).$
\end{lemma}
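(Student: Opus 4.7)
The statement as worded is a definition, not a proposition. The locution ``We say $y \in R^q$ is a regular value for $g$ if $\ldots$'' is the standard form of a mathematical definition: it introduces the term \emph{regular value} by declaring the condition on $Dg(x)$ that will constitute being such a value. Accordingly there is nothing to prove. My plan is to record the definition, noting only that it is well-posed: smoothness of $g$ on $R^m$ ensures that $Dg(x)$ exists at every $x$, hence at every $x \in g^{-1}(y)$, and the requirement $\mathrm{Range}\,Dg(x) = R^q$ is simply the requirement that the associated linear map be surjective onto $R^q$.

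The open set $V \subset R^m$ appearing in the hypothesis does not enter the defining condition itself, so I would read it as scene-setting notation fixing the domain on which subsequent results will invoke the definition, rather than part of the definition proper. Consequently no analytic, algebraic, or combinatorial obstacle arises, and there is nothing that could be called a ``main difficulty.'' The only pitfall worth flagging is not to conflate this definition with its classical companion --- the preimage theorem asserting that $g^{-1}(y)$ is a smooth submanifold of $V$ of codimension $q$ --- which is a genuinely distinct result, proved via the implicit function theorem and requiring an actual argument, but which is not what the lemma above states. If in the sequel the authors silently use that companion fact (for instance to assert smoothness of the homotopy path as the preimage of a regular value), that is where the real work would need to be done; here, however, the appropriate ``proof'' is to acknowledge the definitional character of the statement and move on.
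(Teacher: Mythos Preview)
Your assessment is correct: the statement is a definition of \emph{regular value}, not a theorem, and the paper itself supplies no proof --- it simply cites the definition from \cite{chow1978finding} and moves on. There is nothing further to compare.
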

\begin{lemma}\label{par} \cite{Wang}
Let $V \subset R^n, U \subset R^m$ be open sets, and let $\xi:V\times U \to R^l$ be a $C^\alpha$ mapping, where $\alpha >\text{max}\{0,m-l\}.$ If $0\in R^l$ is a regular value of $\xi,$ then for almost all $a \in V, 0$ is a regular value of $\xi _ a=\xi(a,.).$    
\end{lemma}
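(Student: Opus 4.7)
The plan is to deduce this parametric regular-value statement from the classical Sard theorem applied to the projection from the zero set of $\xi$ onto the parameter space $V$. First I would set $M=\xi^{-1}(0)\subset V\times U$. Because $0\in R^l$ is a regular value of $\xi$ and $\xi$ is $C^\alpha$ with $\alpha\ge 1$, the implicit function theorem tells us that $M$ is a (possibly empty) $C^\alpha$ submanifold of $V\times U$ of dimension $n+m-l$, and that at each $(a,x)\in M$ the tangent space $T_{(a,x)}M$ equals $\ker D\xi(a,x)$.

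Next I would consider the restriction $\pi:M\to V$ of the canonical projection $(a,x)\mapsto a$, which is of class $C^\alpha$. Noting that $\dim M-\dim V = m-l$, the smoothness hypothesis $\alpha>\max\{0,m-l\}$ is exactly the regularity required by the classical Sard theorem, so the set of critical values of $\pi$ has Lebesgue measure zero in $V$. Call the complementary (full-measure) set of regular values $V_0$.

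The core step is then to show: if $a\in V_0$ then $0$ is a regular value of $\xi_a=\xi(a,\cdot)$. Let $x\in U$ with $\xi_a(x)=0$, so $(a,x)\in M$. I must show $D\xi_a(x):R^m\to R^l$ is surjective. Given $v\in R^l$, since $D\xi(a,x)$ is surjective (as $0$ is a regular value of $\xi$), pick $(\alpha,\beta)\in R^n\times R^m$ with $D_a\xi(a,x)\alpha+D_x\xi(a,x)\beta=v$. Since $a$ is a regular value of $\pi$, the differential $D\pi|_M:T_{(a,x)}M\to R^n$ is surjective, so there exists $(\alpha,\delta)\in T_{(a,x)}M=\ker D\xi(a,x)$ with first component precisely $\alpha$; that is, $D_a\xi(a,x)\alpha+D_x\xi(a,x)\delta=0$. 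Subtracting yields
\[
D_x\xi(a,x)(\beta-\delta)=v,
\]
so $D\xi_a(x)=D_x\xi(a,x)$ is surjective, proving $0$ is a regular value of $\xi_a$.

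The subtle point, and the main obstacle worth being careful with, is verifying the smoothness hypothesis $\alpha>\max\{0,m-l\}$ against the dimension inequality required by Sard's theorem for $\pi$: when $m\le l$ the map $\pi$ may have $\dim M\le \dim V$ and any $C^1$ assumption suffices, while for $m>l$ one genuinely needs the stronger differentiability to conclude that the critical-value set has measure zero. Once the two edge cases $m\le l$ and $m>l$ are reconciled by the single bound $\alpha>\max\{0,m-l\}$, the argument above goes through verbatim, and taking the full-measure set $V_0$ as the asserted ``almost all $a\in V$'' completes the proof.
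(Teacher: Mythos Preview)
Your argument is correct and is precisely the standard proof of the parametric Sard (transversality) theorem: form the zero set $M=\xi^{-1}(0)$, project it to the parameter space $V$, apply Sard's theorem to this projection using exactly the differentiability bound $\alpha>\max\{0,m-l\}=\max\{0,\dim M-\dim V\}$, and then use the linear-algebra step you wrote to identify regular values of the projection with parameters $a$ for which $0$ is a regular value of $\xi_a$.

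There is nothing to compare against here, however: the paper does not supply its own proof of this lemma. It is quoted as a preliminary result with the citation \cite{Wang} and invoked later (in Theorem~\ref{reg}) only as a black box. So your write-up is not an alternative to the paper's argument but rather a self-contained justification of a result the authors simply import from the literature. As such it is entirely adequate; the only cosmetic point is that when $m\le l$ one has $\dim M\le \dim V$ and the Sard hypothesis reduces to $C^1$, which your bound $\alpha>\max\{0,m-l\}$ indeed covers, so the ``edge case'' discussion you flag is already handled.
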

\begin{lemma}\label{inv} \cite{Wang}
	Let $\xi : V \subset R^m \to R^q$ be $C^\alpha$ mapping, where $\alpha >\text{max}\{0,m-q\}.$ Then $\xi^{-1}(0)$ consists of some $(m-q)$ dimensional $C^\alpha$ manifolds. 
\end{lemma}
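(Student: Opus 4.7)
The plan is to treat the statement as an application of the Implicit Function Theorem together with the standard fact that the preimage of a regular value under a $C^\alpha$ submersion carries a canonical $C^\alpha$ manifold structure. I will therefore assume (as is customary and as Lemma \ref{par} is designed to supply in the homotopy setting) that $0$ is a regular value of $\xi$, so that for every $x \in \xi^{-1}(0)$ the Jacobian $D\xi(x)$ has full rank $q$.

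First, I would fix an arbitrary point $x_0 \in \xi^{-1}(0)$. Since $D\xi(x_0) \in R^{q \times m}$ has rank $q$, some $q \times q$ minor is non-singular. After permuting coordinates I may write $x = (y,z)$ with $y \in R^{m-q}$ and $z \in R^q$, so that the partial Jacobian $\partial \xi / \partial z$ is invertible at $x_0 = (y_0, z_0)$. Since $\xi$ is $C^\alpha$ with $\alpha > \max\{0, m-q\} \geq 1$, the hypotheses of the Implicit Function Theorem are satisfied.

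Next, I would invoke the Implicit Function Theorem to produce an open neighborhood $U \subset R^{m-q}$ of $y_0$ and a unique $C^\alpha$ map $\varphi : U \to R^q$ with $\varphi(y_0) = z_0$ and $\xi(y, \varphi(y)) = 0$ for every $y \in U$. Shrinking if necessary, the zero set $\xi^{-1}(0)$ coincides, in a neighborhood of $x_0$, with the graph $\{(y, \varphi(y)) : y \in U\}$. The map $y \mapsto (y, \varphi(y))$ then furnishes a local $C^\alpha$ chart on $\xi^{-1}(0)$ around $x_0$, of dimension $m - q$.

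Finally, I would assemble these local graph charts into a $C^\alpha$ atlas. On the overlap of two such charts obtained by distinct coordinate splittings, the transition map is a composition of projections onto subsets of coordinates with the $C^\alpha$ solution maps $\varphi$, and is therefore itself $C^\alpha$. Hence $\xi^{-1}(0)$ is a $(m-q)$-dimensional $C^\alpha$ manifold, proving the lemma. The main technical point is verifying smoothness of transitions on overlaps, but this is automatic from the uniqueness clause of the Implicit Function Theorem once the rank hypothesis is in force; the lower bound $\alpha > \max\{0, m-q\}$ is the natural regularity needed to combine this step with Lemma \ref{par} when the result is applied in a parametric homotopy context.
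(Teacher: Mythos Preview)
The paper does not prove this lemma; it is quoted from \cite{Wang} without argument, so there is no in-paper proof to compare against. Your proof via the Implicit Function Theorem is the standard derivation of the Regular Value (Preimage) Theorem and is correct, including your observation that the statement as printed is missing the hypothesis that $0$ be a regular value of $\xi$---a hypothesis that is clearly intended and is exactly what Lemma~\ref{par} is used to supply when the result is applied in Theorem~\ref{reg}. One minor slip: you write $\alpha > \max\{0,m-q\} \geq 1$, but the second inequality fails when $m \leq q$; all you actually need for the Implicit Function Theorem is $\alpha \geq 1$, which does follow whenever $m>q$ (and in the paper's homotopy application one has $m-q=1$).
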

\begin{lemma}\label{cl} \cite{N} One-dimensional smooth manifold is diffeomorphic to a unit circle or a unit interval.
\end{lemma}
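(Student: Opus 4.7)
The plan is the classical argument from Milnor-style differential topology: equip $M$ with a Riemannian metric so that pieces of $M$ can be parametrized by arc length, and then splice these local parametrizations together to obtain a global description as either a circle or an interval. Since any smooth manifold is paracompact, a partition of unity yields a Riemannian metric on $M$, and every connected coordinate chart can be reparametrized into a diffeomorphism $\phi : J \to U$ from an open interval $J \subseteq \mathbb{R}$ onto an open subset $U \subseteq M$ that is a local isometry with respect to the Euclidean length element on $J$.

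The first key step is a rigidity statement for transition maps. If $\phi_1 : J_1 \to M$ and $\phi_2 : J_2 \to M$ are two arc-length parametrizations whose images meet, then on each connected component of $\phi_1^{-1}(\phi_1(J_1) \cap \phi_2(J_2))$ the transition $\phi_2^{-1} \circ \phi_1$ must have the form $t \mapsto \pm t + c$, because a diffeomorphism between open intervals of $\mathbb{R}$ preserving the standard length element is an affine isometry. This rigidity is the reason nothing other than a circle or an interval can arise.

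Next, I would apply Zorn's lemma to the family of arc-length parametrizations ordered by extension, obtaining a maximal $\phi : J \to M$. Two cases arise. If $\phi$ is injective, then its image is open in $M$, and maximality together with connectedness forces $\phi(J) = M$; otherwise one could splice on a fresh arc-length chart near a boundary point of $\phi(J)$ in $M$ using the $t \mapsto \pm t + c$ rule, contradicting maximality. Hence $M$ is diffeomorphic to the interval $J$, and after affine rescaling to the unit interval. If $\phi$ is not injective, pick distinct $t_1, t_2 \in J$ with $\phi(t_1) = \phi(t_2)$ and apply the rigidity statement near this coincidence to conclude that $\phi$ is periodic with some minimal period $L > 0$, so it factors through a diffeomorphism $\mathbb{R}/L\mathbb{Z} \to M$, realizing $M$ as a circle.

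The main obstacle is the careful gluing at the endpoints. In the injective case one must verify that a missing endpoint of $\phi(J)$ cannot lie in $M$, which is where paracompactness is genuinely invoked to produce a compatible extending chart. In the periodic case one has to check that the induced map on $\mathbb{R}/L\mathbb{Z}$ is actually smooth (not merely continuous) at the seam; this follows from the explicit $t \mapsto \pm t + c$ form of the transitions, but must be checked carefully to rule out the orientation-reversing possibility, which would otherwise yield a contradiction by producing an immersed figure-eight rather than an embedded circle.
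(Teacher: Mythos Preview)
The paper does not give its own proof of this lemma: it is stated with a citation to \cite{N} and used as a black box in the proof of Theorem~\ref{3.7}. So there is nothing in the paper to compare your argument against.

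Your outline is the standard classification argument (essentially the appendix to Milnor's \emph{Topology from the Differentiable Viewpoint}) and is correct in substance. Two small points are worth tightening. First, when you invoke Zorn's lemma on arc-length parametrizations ordered by extension, you should note explicitly that every chain has an upper bound: the union of a nested family of open intervals is again an open interval, and the parametrizations glue to a single arc-length map on it. Second, the lemma as stated in the paper is a bit loose---a connected one-dimensional smooth manifold (without boundary) is diffeomorphic to $S^1$ or to $\mathbb{R}$, and with boundary one also gets $[0,1]$ and $[0,\infty)$; ``unit interval'' is being used informally. Your argument handles the boundaryless case, and the paper actually applies the lemma to a curve with a boundary point $(u^{(0)},1)$, so strictly speaking the with-boundary version is what is needed; the modification is routine (work in charts modeled on $[0,\infty)$ at boundary points).

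Your discussion of the orientation-reversing possibility in the non-injective case is slightly muddled: a figure-eight is not a manifold, so it cannot arise here. What actually happens if the transition near a self-intersection were $t\mapsto -t+c$ is that $\phi$ would satisfy $\phi(t_1+s)=\phi(t_2-s)$ for small $s$, and propagating this relation shows $\phi$ fails to be locally injective somewhere in $J$, contradicting that it is a local diffeomorphism. This is the correct way to dispose of that case.
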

\begin{lemma}\cite{cordero2012increasing}
 Let $f:R^n\to R^n$ 
be a sufficiently differentiable function in a neighborhood $D$ of $\alpha,$ that is a solution of the system
$f(x) = 0,$ whose Jacobian matrix is continuous and nonsingular in D.  Consider the iterative method $z^k=\phi(x^k,y^k), \ w^k=z^k-f'(y^k)^{-1}f(z^k),$ where $y^k=x^k-f'(x^k)^{-1}f(x^k)$ and $z^k=\phi(x^k,y^k)$ is the iteration function of a method of order $p.$ Then for an initial approximation sufficiently close to $\alpha,$ this
method  has order of convergence $p + 2$.
 \end{lemma}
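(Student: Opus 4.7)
The plan is a standard Taylor-expansion error analysis around the root $\alpha$, tracking three successive errors: the Newton-step error $y^k - \alpha$, the predictor error $z^k - \alpha$ coming from the order-$p$ method $\phi$, and finally the error $w^k - \alpha$ of the corrector step. Write $e_k = x^k - \alpha$ and $C_j = \frac{1}{j!}\, f'(\alpha)^{-1} f^{(j)}(\alpha)$; the classical analysis of Newton's iteration gives $y^k - \alpha = C_2 (e_k)^2 + O(\|e_k\|^3)$, hence $\|y^k - \alpha\| = O(\|e_k\|^2)$, while $\|z^k - \alpha\| = O(\|e_k\|^p)$ by hypothesis on $\phi$.

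The key step is to expand $f(z^k)$ and $f'(y^k)^{-1}$ about $\alpha$ and substitute into $w^k - \alpha = (z^k - \alpha) - f'(y^k)^{-1} f(z^k)$:
\begin{align*}
f(z^k) &= f'(\alpha)\bigl[(z^k-\alpha) + C_2 (z^k-\alpha)^2 + O(\|z^k-\alpha\|^3)\bigr], \\
f'(y^k)^{-1} &= \bigl[I - 2 C_2 (y^k-\alpha) + O(\|y^k-\alpha\|^2)\bigr]\, f'(\alpha)^{-1}.
\end{align*}
The leading linear-in-$(z^k-\alpha)$ contribution in the product cancels the $(z^k - \alpha)$ prefix in $w^k - \alpha$, leaving
\begin{equation*}
w^k - \alpha = 2 C_2 (y^k-\alpha)(z^k-\alpha) - C_2 (z^k-\alpha)^2 + R_k,
\end{equation*}
where $R_k$ collects strictly higher-order cross terms. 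Since $(y^k-\alpha)(z^k-\alpha) = O(\|e_k\|^{p+2})$, $(z^k-\alpha)^2 = O(\|e_k\|^{2p})$, and $p \ge 2$ forces $2p \ge p+2$, this yields $\|w^k - \alpha\| = O(\|e_k\|^{p+2})$ and hence the claimed order.

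The main obstacle is the clean bookkeeping of multivariate Taylor terms. One must carefully verify that the coefficient of $(z^k - \alpha)$ in $f'(y^k)^{-1} f(z^k)$ is exactly the identity, so that the linear error truly cancels, and that every surviving term carries either a factor of $y^k - \alpha$ (contributing $\|e_k\|^2$) or a factor of $(z^k-\alpha)^2$ (contributing $\|e_k\|^{2p}$). This is routine once the expansions are written coordinate-free using the symmetric multilinear derivatives $f^{(j)}(\alpha)[\,\cdot\,, \ldots, \cdot\,]$, and it invokes only the nonsingularity of $f'(\alpha)$ on $D$ to ensure $f'(y^k)$ is invertible near the root.
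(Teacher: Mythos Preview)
The paper does not supply its own proof of this lemma: it is quoted verbatim as a preliminary result from Cordero et al.\ \cite{cordero2012increasing}, so there is no in-paper argument to compare against. Your Taylor-expansion analysis is exactly the technique used in that reference, and your derivation of
\[
w^k-\alpha \;=\; 2C_2(y^k-\alpha)(z^k-\alpha)\;-\;C_2(z^k-\alpha)^2\;+\;R_k
\]
is correct. One remark: you invoke the inequality $2p\ge p+2$, i.e.\ $p\ge 2$, to ensure the $(z^k-\alpha)^2$ term does not dominate. That hypothesis is not stated in the lemma as reproduced here, but it is present (and necessary) in the original Cordero et al.\ statement; without it the conclusion can fail, so it is worth flagging explicitly as an assumption rather than leaving it buried in the estimate.
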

\begin{lemma}\cite{cordero2012increasing}\label{coorder}
 Consider the function $f:R^n\to R^n$ and the iterative method
 $y^k=x^k-f'(x^k)^{-1}f(x^k),$ \ 
 $z^k=x^k-2(f'(y^k)+f'(x^k))^{-1}f(x^k), \ w^k=z^k-f'(y^k)^{-1}f(z^k)$   has $5$th order convergence.
\end{lemma}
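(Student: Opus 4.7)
The plan is to prove the fifth-order convergence by establishing that the intermediate iterate $z^k$ produces a third-order method, and then invoking the preceding lemma (the one that promotes a method of order $p$ to order $p+2$ via the correction $w^k=z^k-f'(y^k)^{-1}f(z^k)$) with $p=3$. So the bulk of the work is a careful Taylor expansion showing $z^k-\alpha=O(e_k^3)$, after which the fifth-order conclusion is immediate from the previous lemma.

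The first step is to set up the standard Taylor apparatus around the root $\alpha$: write $e_k=x^k-\alpha$, $C_j=\tfrac{1}{j!}\,f'(\alpha)^{-1}f^{(j)}(\alpha)$, and expand
\begin{equation*}
f(x^k)=f'(\alpha)\bigl[e_k+C_2 e_k^2+C_3 e_k^3+O(e_k^4)\bigr],\qquad f'(x^k)=f'(\alpha)\bigl[I+2C_2 e_k+3C_3 e_k^2+O(e_k^3)\bigr].
\end{equation*}
From these I obtain the Newton-step error $e_y=y^k-\alpha=C_2 e_k^2+O(e_k^3)$, and then expand $f'(y^k)=f'(\alpha)\bigl[I+2C_2^2 e_k^2+O(e_k^3)\bigr]$. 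Adding the expansions of $f'(x^k)$ and $f'(y^k)$ and dividing by $2$, the leading factor is $f'(\alpha)\bigl[I+C_2 e_k+(\tfrac{3}{2}C_3+C_2^2)e_k^2+O(e_k^3)\bigr]$.

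The next step is to invert this factor using the Neumann series $(I+A)^{-1}=I-A+A^2-\cdots$ up to the relevant order, yielding
\begin{equation*}
2\bigl(f'(y^k)+f'(x^k)\bigr)^{-1}=\bigl[I-C_2 e_k-\tfrac{3}{2}C_3 e_k^2+O(e_k^3)\bigr]f'(\alpha)^{-1}.
\end{equation*}
Multiplying by $f(x^k)$ and subtracting from $e_k$, the $e_k$ and $e_k^2$ contributions cancel exactly, leaving
\begin{equation*}
z^k-\alpha=\bigl(C_2^2+\tfrac{1}{2}C_3\bigr)e_k^3+O(e_k^4),
\end{equation*}
which establishes that the predictor $z^k=\phi(x^k,y^k)$ has order $p=3$. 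Applying Lemma \ref{coorder}'s preceding lemma then upgrades the Newton-like correction $w^k=z^k-f'(y^k)^{-1}f(z^k)$ to order $p+2=5$, completing the argument.

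The main obstacle is the bookkeeping in the matrix-valued Neumann inversion: the cancellations at orders $e_k$ and $e_k^2$ in the expression $e_k-2(f'(y^k)+f'(x^k))^{-1}f(x^k)$ are delicate because the $C_j$ need not commute with each other, so one must keep the factors in the correct order throughout. A secondary care point is to include enough terms in the $f'(y^k)$ expansion — in particular, noting that $e_y=C_2 e_k^2+O(e_k^3)$ feeds only an $O(e_k^2)$ perturbation into $f'(y^k)$, so the linear part of the sum $f'(y^k)+f'(x^k)$ comes entirely from $f'(x^k)$; missing this point leads to errors in the coefficient of $e_k^2$ in the inverse. Once the order-$3$ estimate for $z^k$ is in hand, no further ad hoc work is needed since the cited lemma above handles the final promotion to order $5$ as a black box.
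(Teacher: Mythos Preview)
Your argument is correct: the Taylor expansions are set up properly, the Neumann inversion of $\tfrac12(f'(x^k)+f'(y^k))$ is carried out to the right order with the noncommutativity caveat handled, and the resulting identity $z^k-\alpha=(C_2^2+\tfrac12 C_3)e_k^3+O(e_k^4)$ is exactly what is needed to feed into the preceding order-promotion lemma with $p=3$.

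Regarding comparison with the paper: the paper does not give a proof of this lemma at all. It is stated as a cited result from \cite{cordero2012increasing} and used as a black box later (in the theorem establishing order $5^m-1$ for the modified homotopy continuation method). So your write-up actually supplies more than the paper does here. Your strategy---prove order $3$ for the midpoint-type predictor $z^k$ directly, then invoke the ``$p\mapsto p+2$'' lemma---is in fact the natural route and is how the original reference proceeds; it has the advantage of reusing the previous lemma rather than redoing the full expansion of $w^k-\alpha$ from scratch.
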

\section{Main results}
In this section, we consider the two-person zero-sum discounted stochstic ARAT game and introduce a  homotopy continuation method to find the solution of the discounted zero-sum ARAT game. We state that a pair of strategies $(k^{\star},l^{\star})$ is optimal for Player I and Player II in the discounted game if for all $s \in S$ $\phi_s(k, l^{\star})\leq \phi_s(k^{\star}, l^{\star}) = v^{\star} _s\leq \phi_s(k^{\star}, l)$
for any strategies $k$ and $l$ of Player I and Player II. The number $v^{\star} _s$
is called the value of the game starting in state $s$ and $v^{\star} = (v^{\star} _1, v^{\star} _2, \cdots, v^{\star} _N )$ is called the value vector. To find the optimal strategy of player I and player II of the two-person zero-sum discounted  ARAT stochastic game  we propose a new function based on the concept of homotopy.\\

Let $X_1,X_2$ be two topological spaces and  $f_1, f_2: X_1 \to X_2$ be continuous maps. A homotopy from $f_1$ to $f_2$ is a continuous function $H:X_1 \times [0,1]  \to X_2,$ defined by $H(x,t)=(1-t)f_1(x)+tf_2(x)$  satisfying $H(x,0) = f_1(x),$ $H(x,1) = f_2(x)  \ \forall x \in X_1.$ The value of $t$ will start from $1$ and goes to $0$ and in this way one can find the solution of the given equation $f_1(x) = 0$ from the solution of $f_2(x) = 0.$\\

 \begin{equation} \label{homf}
H(u,t)=\left[\begin{array}{c} 
(1-t)[(A+A^T)x+q-y_1-A^Ty_2]+t(x-x^{(0)}) \\
Y_1x-t Y_1^{(0)}x^{(0)} +(1-t)X(Ax+q)\\
Y_2(Ax+q)-t Y_2^{(0)}(Ax^{(0)} + q)\\
\end{array}\right]=0
\end{equation} where $Y_1=\text{diag}(y_1),$ $X=\text{diag}(x),$ $Y_2=\text{diag}(y_2),$ $Y_1^{(0)}=\text{diag}(y_1^{(0)}),$ $Y_2^{(0)}=\text{diag}(y_2^{(0)}),$ $u=(x,y_1,y_2) \in R_+^n \times R_+^n \times R_+^n,$ $u^{(0)}=(x^{(0)},{y_1}^{(0)},{y_2}^{(0)})\in \mathcal{R}_1,$ and $\lam \in (0,1].$
\subsection{Computing Solution of ARAT Stochastic Game based on Homotopy Continuation Method}

Now we show that the  solution of the proposed homotopy function will give the solution of discounted ARAT stochastic game. 
\begin{theorem}\label{3.1}
Suppose $\Gamma_u^{(0)}=\{(u,t)\in R^{3n}\times (0,1]: H(u,u^{(0)},t)=0\} \subset \mathcal{R}_1 \times (0,1]\},$  and 
$\cal{A} =\left[\begin{array}{cc} 
-\beta P_1 & E-\beta P_1\\
-E+\beta P_2 & \beta P_2\\
\end{array}\right]$
 and $q =\left[\begin{array}{c} 
-r^1 _i(s)\\
r^2 _j(s)\\
\end{array}\right],$ where the matrix $P_1=P_1(s), P_2=P_2(s)$  and $E=\left[\begin{array}{ccccc} 
e_1 & 0 & 0 & \cdots & 0\\
0 & e_2 & 0 & \cdots & 0\\
\vdots & \vdots & \vdots & \cdots & \vdots\\
0 & 0 & 0 & \cdots & e_d\\
\end{array}\right]$ is a vertical block identity matrix where $e_j, 1\leq j\leq d,$ is a column vector of all $1'$s.
Then the homotopy function 3.1 solves discounted zero-sum stochastic ARAT game. 
\end{theorem}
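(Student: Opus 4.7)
The plan is to evaluate the homotopy equation (3.1) at $t=0$, show that this collapses to the (vertical) linear complementarity problem associated with $\mathcal{A}$ and $q$, and then invoke the VLCP formulation of the discounted zero-sum ARAT game from \cite{mohan1999vertical} recalled in Section~2.

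First, I would substitute $t=0$ in $H(u,t)=0$ to obtain the three block equations
$$(A+A^T)x + q - y_1 - A^T y_2 = 0,\qquad Y_1 x + X(Ax+q) = 0,\qquad Y_2(Ax+q) = 0.$$
Any limit point of the path $\Gamma_u^{(0)}$ as $t\downarrow 0$ lies in the closure $\bar{\mathcal{R}}_1$, so $x\geq 0$, $y_1\geq 0$, $y_2\geq 0$ and $Ax+q\geq 0$. Since $Y_1 x$ and $X(Ax+q)$ are entrywise nonnegative vectors whose sum vanishes, each must vanish separately; combining this with the third equation gives the componentwise complementarities
$$x\odot y_1=0,\qquad x\odot(Ax+q)=0,\qquad y_2\odot(Ax+q)=0.$$
Hence, with $A=\mathcal{A}$, the pair $(x,\mathcal{A}x+q)$ solves LCP$(q,\mathcal{A})$, while the first row of $H$ is simply the KKT stationarity condition for the quadratic reformulation $\min x^T(\mathcal{A}x+q)$ over $x\geq 0$, $\mathcal{A}x+q\geq 0$, with $y_1,y_2$ as Lagrange multipliers; the multipliers carry no game-theoretic content of their own.

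Next, I would plug the block form of $\mathcal{A}$ and $q$ from the statement into $\mathcal{A}x+q\geq 0$, partition $x=(x^{(1)},x^{(2)})$ along the two column blocks, and unpack row by row. Using the ARAT additivity $r(s,i,j)=r^1_i(s)+r^2_j(s)$ and $p_{ij}(s,s')=p^1_i(s,s')+p^2_j(s,s')$, the two block inequalities translate exactly into the Shapley inequalities (1.1) and (1.2) for Players~I and~II: the identity blocks $E$ pick out value-vector entries and the transition blocks $\beta P_1$, $\beta P_2$ produce the discounted continuation terms, while the components of $x^{(1)}$, $x^{(2)}$ assigned to the action-indexed rows encode pure optimal stationary strategies (purity being guaranteed by Theorem~2.1). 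Complementary slackness then enforces equality at every action played with positive weight, which is the optimality statement. This reproduces the VLCP-to-ARAT correspondence established in \cite{mohan1999vertical}.

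The main obstacle will be the last bookkeeping step: carefully matching the components of $x$ to the value vector and to action indicators for each state and each player, and verifying that the corresponding row blocks of $\mathcal{A}x+q$ reproduce (1.1) and (1.2) entry by entry. Well-posedness of the $t\downarrow 0$ limit is not a concern, because Theorems~2.2 and~2.3 recalled above guarantee that $\mathcal{A}\in\text{VB}E(e)$ and, under a mild additional hypothesis, $\mathcal{A}\in\text{VB}R_0$, so the VLCP has the required structure for the homotopy path to terminate at the desired solution.
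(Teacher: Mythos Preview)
Your approach and the paper's are essentially the same correspondence traversed in opposite directions: the paper starts from the Shapley inequalities, introduces the value decomposition $v_\beta(s)=\xi_\beta(s)+\eta_\beta(s)$, derives the VLCP$(q,\mathcal{A})$, passes to the equivalent square LCP$(q,A)$, and then observes that the $t\to 0$ limit of the homotopy is exactly that system; you start from the $t=0$ system, extract the LCP, and plan to cite \cite{mohan1999vertical} for the VLCP--ARAT identification. Your additional remark that $Y_1x+X(Ax+q)=0$ with nonnegative summands forces each to vanish, and that the first block is the KKT stationarity for the quadratic reformulation, is a genuine clarification the paper leaves implicit.

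Two points of imprecision to fix. First, $A\neq\mathcal{A}$: the matrix $\mathcal{A}$ in the statement is a \emph{vertical block} matrix with $2d$ columns, and the $A$ appearing in the homotopy is the equivalent square matrix obtained by duplicating each column $m_j$ times (as described in Section~2.1.1). Writing ``with $A=\mathcal{A}$'' is incorrect; you must pass through this column-duplication step. Second, your reading of the $x$-components is off: in the VLCP the $2d$ unknowns are the decomposed value pieces $\eta_\beta(s),\xi_\beta(s)$ (one pair per state), not action indicators, and after duplication the components of $x$ are repeated copies of these. The optimal pure actions are recovered not from $x$ itself but from which rows of $w=Ax+q$ vanish. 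Once you correct the bookkeeping along these lines, your argument matches the paper's.
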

\begin{proof}
   Suppose for a zero-sum discounted ARAT game the optimal pure strategy in state $s$ is $i_0$ for Player I and $j_0$ for Player II.\\ Then the inequality \ref{inn1} and the inequality \ref{inn2} reduces to\\
   
   \begin{equation}\label{4.1}
r^1 _i (s) + r^2 _{j_0} (s) + \beta \sum_{s'}p^1_i(s,s')v_\beta(s') + \beta\sum_{s'}p^2_{j_0}(s,s')v_\beta(s') \leq v_\beta(s) \ \forall \ i.
\end{equation}
\begin{equation}\label{4.2}
r^1 _{i_0} (s) + r^2 _{j} (s) + \beta \sum_{s'}p^1_{i_0}(s,s')v_\beta(s') + \beta\sum_{s'}p^2_{j}(s,s')v_\beta(s') \geq v_\beta(s) \ \forall \ j.
\end{equation}

\NI The  inequalities \ref{4.1} and \ref{4.2} yield\\
$r^1 _i (s) + \beta \sum_{s'}p^1_i(s,s')v_\beta(s') ≤ v_\beta(s) − \eta_\beta (s) = \xi_\beta (s) \ \forall \ i,$\\ where $\eta_\beta (s) = r^2 _{j_0} (s) + \beta \sum_{s'}p^2_{j_0}(s,s')v_\beta(s')$ and $\xi_\beta (s) = r^1 _{i_0} (s) + \beta \sum_{s'}p^1_{i_0}(s,s')$ and $ \xi_\beta (s)+\eta_\beta (s)=v_\beta(s).$\\
Thus the inequalities are\\
\begin{equation}\label{eq1}
    r^1 _i (s)  + \beta \sum_{s'} p^1_i(s,s')\xi_\beta (s') − \xi_\beta (s) + \beta \sum_{s'} p^1_i(s,s')\eta_\beta (s') \leq 0 \ \forall i \in A_s, s \in S
\end{equation}
and similarly the inequalities for Player II are
\begin{equation}\label{eq2}
   r^2 _j (s)  + \beta \sum_{s'} p^2_j(s,s')\eta_\beta (s') − \eta_\beta (s) + \beta \sum_{s'} p^2_j(s,s')\xi_\beta (s') \geq 0 \ \forall j \in B_s, s \in S .
   \end{equation}

Also for each $s$, in \ref{eq1} there is an $i(s)$ such that equality holds. Similarly, for each $s$ in \ref{eq2} there is a $j(s)$ such that equality holds. Let for $i \in A_s,$
\begin{equation}\label{eq3}
   w^1 _i(s)= −r^1 _i(s) − \beta \sum_{s'} p^1 _i(s, s')\eta\beta(s') + \xi\beta(s) − \beta\sum_{s'} p^1 _i(s, s')\xi\beta(s') \geq 0,
\end{equation}
 and for $j \in B_s$,
 \begin{equation}\label{eq4}
     w^2 _j(s)= r^2 _j(s) − \eta \beta(s) + \beta \sum_{s'} p^2 _j(s, s')\eta \beta(s') + \beta \sum_{s'} p^2 _j(s, s')\xi\beta(s') \geq 0.
 \end{equation} 
We may assume without loss of generality that $\eta\beta(s), \xi\beta(s)$ are strictly positive. Since there is at least one inequality in \ref{eq3} for each $s \in S$ that
holds as an equality and one inequality in \ref{eq4} for each $s \in S$ that holds as an equality, the following complementarity conditions will hold.\\
\begin{equation}\label{eq5}
    \eta_\beta(s)\prod_{i \in A_s}w^1 _i(s) = 0 \ for \ 1 \leq s \leq d 
\end{equation}
and 
\begin{equation}\label{eq6}
    \xi_\beta(s)\prod_{j \in B_s}w^2 _j(s) = 0 \ for \ 1\leq s \leq d .
\end{equation}
The inequality \ref{eq3} and inequality \ref{eq4} along with the complementarity
conditions \ref{eq5} and \ref{eq6} lead to the VLCP$(q, \cal A)$ where the matrix $\cal{A}$ is of the form \\ $\cal{A} =\left[\begin{array}{cc} 
-\beta P_1 & E-\beta P_1\\
-E+\beta P_2 & \beta P_2\\
\end{array}\right]$
 and $q =\left[\begin{array}{c} 
-r^1 _i(s)\\
r^2 _j(s)\\
\end{array}\right],$\\ where  the matrix $P_1=P_1(s), P_2=P_2(s)$  and\\ $E=\left[\begin{array}{ccccc} 
e_1 & 0 & 0 & \cdots & 0\\   
0 & e_2 & 0 & \cdots & 0\\
\vdots & \vdots & \vdots & \cdots & \vdots\\
0 & 0 & 0 & \cdots & e_d\\
\end{array}\right]$ is a vertical block identity matrix where $e_j, 1\leq j\leq d,$ is a column vector of all $1'$s. 
Now an equivalent square matrix $A$ can be constructed from the vertical block matrix $\cal{A}$ of type $(m_1,..., m_c )$ by copying $\cal{A}$$._{j},$  $m_j$ times for $j=1,2,\cdots,c.$ Therefore $A._{p} = \cal{A}$$._{j}$         $ \forall  p \in J_j$ and  the LCP$(q, A)$ is the equivalent LCP of VLCP$(q, \cal A)$.  We consider the proposed homotopy function \ref{homf}\\  
\begin{equation} 
H(u,t)=\left[\begin{array}{c} 
(1-t)[(A+A^T)x+q-y_1-A^Ty_2]+t(x-x^{(0)}) \\
Y_1x-t Y_1^{(0)}x^{(0)} +(1-t)X(Ax+q)\\
Y_2(Ax+q)-t Y_2^{(0)}(Ax^{(0)} + q)\\
\end{array}\right]=0
\end{equation}\\
where $Y_1=\text{diag}(y_1),$ $X=\text{diag}(x),$ $Y_2=\text{diag}(y_2),$ $Y_1^{(0)}=\text{diag}(y_1^{(0)}),$ $Y_2^{(0)}=\text{diag}(y_2^{(0)}),$ $u=(x,y_1,y_2) \in R_+^n \times R_+^n \times R_+^n,$ $u^{(0)}=(x^{(0)},{y_1}^{(0)},{y_2}^{(0)})\in \mathcal{R}_1,$ and $\lam \in (0,1].$ We denote $\Gamma_u^{(0)}=\{(u,t)\in R^{3n}\times (0,1]: H(u,u^{(0)},t)=0\} \subset \mathcal{R}_1 \times (0,1]\}.$ 
  For the proposed homotopy function $t$ varies from $1$ to $0.$ Starting from $t =1$ to $t \to 0$ if we have a smooth  bounded curve, then we obtain a finite solution of the equation \ref{homf} at $t \to 0.$ 
As $t \to 1,$ the  equation \ref{homf} gives the solution $(u^{(0)},1),$ and as $t \to 0,$ the equation \ref{homf} gives the solution of the system of following equations:
\begin{center}
	$(A+A^t)x+q-y_1-A^ty_2=0$\\
	$Y_1x+X(Ax+q)=0$\\
	$Y_2(Ax+q)=0$\\
\end{center}
where $Y_1=\text{diag}(y_1)$ and $Y_2=\text{diag}(y_2).$ Hence the solution of the homotopy function \ref{homf} gives the solution of  discounted zero-sum ARAT game. 
\end{proof}

Therefore if the homotopy function \ref{homf} converges to its solution as the parameter $t \to 0$, we  obtain the solution of discounted ARAT stochastic game.
\vsp

Now we establish the conditions under which the solution exists for the  proposed homotopy function \ref{homf}. We prove the following result to show that the smooth curve $\Gamma_u^{(0)}$ exists for the proposed homotopy function \ref{homf}.

\begin{theorem}\label{reg}
 Let initial point $u^{(0)} \in \mathcal{R}_1.$ Then $0$ is a regular value of the homotopy function $H:R^{3n} \times (0,1] \to R^{3n}$ and the zero point set $H^{-1}(0)=\{(u,t)\in \mathcal{R}_1:H(u,t)=0\}$ contains a smooth curve $\Gamma_u^{(0)}$ starting from $(u^{(0)}, 1).$  
\end{theorem}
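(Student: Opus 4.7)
The plan is to apply the parametric Sard theorem (Lemma~\ref{par}) with $u^{(0)}$ playing the role of the varying parameter, after first exhibiting a partial Jacobian with respect to $u^{(0)}$ that has full rank $3n$ on the zero set of the extended map. Concretely, regard the homotopy as a $C^\infty$ map
\[\tilde H:\mathcal{R}_1 \times \mathcal{R}_1 \times (0,1] \to R^{3n}, \qquad (u, u^{(0)}, t) \mapsto H(u, u^{(0)}, t),\]
with $H$ given by \eqref{homf}. Differentiating each of the three blocks in \eqref{homf} with respect to $(x^{(0)}, y_1^{(0)}, y_2^{(0)})$ produces the lower block-triangular matrix
\[D_{u^{(0)}}\tilde H \;=\; \begin{pmatrix} -tI & 0 & 0 \\ -tY_1^{(0)} & -tX^{(0)} & 0 \\ -tY_2^{(0)}A & 0 & -t\,\text{diag}(Ax^{(0)}+q)\end{pmatrix}.\]
Because $u^{(0)} \in \mathcal{R}_1$ forces $x^{(0)} > 0$ and $Ax^{(0)} + q > 0$, and $t \in (0,1]$, each diagonal block is nonsingular, so $D_{u^{(0)}}\tilde H$ has rank $3n$ at every point of $\tilde H^{-1}(0)$. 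Consequently the full Jacobian $D\tilde H$ also has rank $3n$ there, and $0$ is a regular value of $\tilde H$.

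Next I would invoke Lemma~\ref{par} with $V = \mathcal{R}_1$ and $U = \mathcal{R}_1 \times (0,1]$ to conclude that, for almost every $u^{(0)} \in \mathcal{R}_1$, $0$ is a regular value of the slice map $H_{u^{(0)}}(u,t) := \tilde H(u, u^{(0)}, t)$. Lemma~\ref{inv} then gives that $H_{u^{(0)}}^{-1}(0)$ is a one-dimensional smooth $C^\alpha$ manifold. To locate the starting point on it, substitute $t = 1$ in \eqref{homf}; the three components collapse to $x - x^{(0)}$, $Y_1 x - Y_1^{(0)} x^{(0)}$ and $Y_2(Ax+q) - Y_2^{(0)}(Ax^{(0)} + q)$, which vanish simultaneously precisely when $u = u^{(0)}$. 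Furthermore, evaluating $D_u H$ at $(u^{(0)}, 1)$ gives the lower block-triangular matrix
\[D_u H\big|_{(u^{(0)},1)} \;=\; \begin{pmatrix} I & 0 & 0 \\ Y_1^{(0)} & X^{(0)} & 0 \\ Y_2^{(0)} A & 0 & \text{diag}(Ax^{(0)}+q) \end{pmatrix},\]
which is nonsingular, so $(u^{(0)}, 1)$ is a regular point of $H$ for every $u^{(0)} \in \mathcal{R}_1$. Hence the connected component of $H^{-1}(0)$ through $(u^{(0)},1)$ is a smooth curve $\Gamma_{u^{(0)}}$ emanating from $(u^{(0)},1)$.

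The subtle step is the standard gap in such arguments: Lemma~\ref{par} yields the regular-value property only for a full-measure subset of initial data, whereas the statement asks for it for every $u^{(0)} \in \mathcal{R}_1$. I would bridge this by pointing out that the partial-Jacobian computation above is pointwise in $u^{(0)}$, and in particular the nonsingularity of $D_u H$ at the distinguished starting point $(u^{(0)},1)$ holds for every admissible $u^{(0)}$. Thus a smooth continuation issues from the starting point in every case, with the global one-dimensional manifold structure in the interior $t \in (0,1)$ following from Sard together with Lemma~\ref{inv}.
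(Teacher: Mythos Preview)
Your argument is essentially the same as the paper's: both compute the partial Jacobian $\partial H/\partial u^{(0)}$, observe it is lower block-triangular with nonsingular diagonal blocks for $t\in(0,1]$ and $u^{(0)}\in\mathcal{R}_1$, and then invoke Lemmas~\ref{par} and~\ref{inv} together with $H(u^{(0)},1)=0$ to obtain the smooth curve. You go a bit further than the paper by also verifying that $D_uH|_{(u^{(0)},1)}$ is nonsingular and by flagging the ``almost every $u^{(0)}$'' versus ``every $u^{(0)}$'' issue, which the paper leaves implicit; these are welcome clarifications but not a different route.
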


\begin{proof}
	The Jacobian matrix of the above homotopy function $H(u, u^{(0)}, t)$ is \\ $DH(u,u^{(0)},t)=$ $\left[\begin{array}{ccc} 
		\frac{\partial{H(u,t)}}{\partial{u}} & 	\frac{\partial{H(u,t)}}{\partial{u^{(0)}}} & \frac{\partial{H(u,t)}}{\partial{t}}\\ 
	\end{array}\right].$ For all $u^{(0)} \in \mathcal{R}_1$ and $t \in (0,1],$ \\ $\frac{\partial{H(u,t)}}{\partial{u^{(0)}}}=$ $\left[\begin{array}{ccc} -\lam I & 0 & 0\\
	-\lam Y_1^{(0)} & -\lam X^{(0)} & 0\\
	-\lam Y_2^{(0)}A &  0 & -t Y^{(0)}\\ 
	\end{array}\right],$ \\where $Y^{(0)}=\text{diag}(Ax^{(0)}+q), X^{(0)}=\text{diag}(x^{(0)}),$ $y^{(0)}=Ax^{(0)}+q$.\\ Now
	$\det(\frac{\partial{H}}{ \partial{u^{(0)}}})=(-1)^{3n}t^{3n}\prod_{i=1}^{n} x_i^{(0)}y_i^{(0)}$ $\neq 0$ for $t \in (0,1].$
	Therefore, $0$ is a regular value of $H(u,u^{(0)},t)$ by the lemma \ref{main}. By lemma \ref{par} and lemma \ref{inv}, for almost all  $u^{(0)} \in \mathcal{R}_1,$ $0$ is a regular value of $H(u,t)$ and $H^{-1}(0)$ consists of some smooth curves and $H(u^{(0)},1)=0.$ Hence there must be a smooth curve  $\Gamma_u^{(0)}$ starting from  $(u^{(0)},1).$
\end{proof}
Hence by Implicit Function Theorem for every $t$ sufficiently close to $1$, the homotopy function has a unique solution $(u,1) $ of \ref{homf}, which is smooth in the parameter  $t$ in a neighbourhood of $(u^{(0)},1) $.\\

 We prove the following result to show that the smooth curve $\Gamma_u^{(0)}$ for the proposed homotopy function \ref{homf} is bounded and convergent. 

\begin{theorem}\label{bnd}
	Let $\mathcal{R}$ be a non-empty set and $A \in R^{n\times n}$  a  matrix and assume that there exists a sequence of points $\{u^k\} \subset \Gamma_u^{(0)} \subset \mathcal{R}_1 \times (0,1],$ where $u^k=(x^k,y_1^k,y_2^k, t^k)$ such that $\|x^k\|< \infty \ \text{as} \ k \to \infty$ and $\|y_2^k\|< \infty \ \text{as} \ k \to \infty$ and for a given $u^{(0)} \in \mathcal{R}_1,$ $0$ is a regular value of $H(u,u^{(0)},t),$ then $\Gamma_u^{(0)}$ is a bounded curve in $\mathcal{R}_1 \times (0,1].$  
\end{theorem}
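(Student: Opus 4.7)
The plan is to argue by contradiction. Suppose $\Gamma_u^{(0)}$ is unbounded in $\mathcal{R}_1\times(0,1]$. Since $t^k\in(0,1]$ is automatically bounded and by hypothesis $\{x^k\}$ and $\{y_2^k\}$ are bounded along any sequence in $\Gamma_u^{(0)}$, the only witness to unboundedness can be a sequence $\{(u^k,t^k)\}\subset\Gamma_u^{(0)}$ along which $\|y_1^k\|\to\infty$. My whole task is to rule this out by playing the three blocks of $H(u^k,u^{(0)},t^k)=0$ against one another.

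\emph{Step 1: forcing $t^k\to 1$.} I rewrite the first block in the form $(1-t^k)y_1^k=(1-t^k)[(A+A^T)x^k+q-A^Ty_2^k]+t^k(x^k-x^{(0)})$; the right-hand side is uniformly bounded in $k$, so $\|y_1^k\|\to\infty$ forces $1-t^k\to 0$. After extracting a subsequence I may also assume $x^k\to x^{\ast}$, $y_2^k\to y_2^{\ast}$, and some fixed coordinate $y_{1,i_0}^k\to+\infty$.

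\emph{Step 2: forcing $x_{i_0}^k\to 0$.} The $i_0$-th coordinate of the second block reads $y_{1,i_0}^k x_{i_0}^k+(1-t^k)x_{i_0}^k(Ax^k+q)_{i_0}=t^k y_{1,i_0}^{(0)}x_{i_0}^{(0)}$. Every factor on the left is strictly positive because $(u^k,t^k)\in\mathcal{R}_1\times(0,1]$, so $x_{i_0}^k y_{1,i_0}^k\le y_{1,i_0}^{(0)}x_{i_0}^{(0)}$, and Step~1 together with $y_{1,i_0}^k\to\infty$ now forces $x_{i_0}^k\to 0$.

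\emph{Step 3: collision in the first block.} Going back to the $i_0$-th component of the first block and solving for $y_{1,i_0}^k$ gives $y_{1,i_0}^k=[(A+A^T)x^k]_{i_0}+q_{i_0}-[A^Ty_2^k]_{i_0}+\frac{t^k}{1-t^k}(x_{i_0}^k-x_{i_0}^{(0)})$. The first three summands remain bounded; in the last, $t^k/(1-t^k)\to+\infty$ while, by Step~2, $x_{i_0}^k-x_{i_0}^{(0)}\to -x_{i_0}^{(0)}<0$. Hence $y_{1,i_0}^k\to-\infty$, which contradicts both $y_{1,i_0}^k>0$ and $y_{1,i_0}^k\to+\infty$ from Step~1, closing the argument. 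The main obstacle is Step~3: the proof hinges on the strict positivity $x^{(0)}>0$ built into $\mathcal{R}_1$, which produces a \emph{definite} negative sign in the coefficient of the diverging factor $t^k/(1-t^k)$; without that sign the collision between the first and second blocks disappears and the contradiction fails. Verifying that Step~1 really exhausts the escape routes (bounded $t^k$ is automatic, bounded $x^k,y_2^k$ are hypothesized, only $y_1^k$ can blow up, and the first block is the unique equation coupling $y_1$ linearly to $t$ and $x-x^{(0)}$) is the other point that needs careful bookkeeping before the three-step chain can be invoked.
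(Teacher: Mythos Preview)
Your argument is correct and follows essentially the same route as the paper: assume unboundedness, reduce to $\|y_1^k\|\to\infty$, use the first block of $H=0$ to handle $\bar t<1$ (your Step~1 packages this as ``force $t^k\to1$''), use the second block to get $x_{i_0}^k\to0$, and then return to the $i_0$-th component of the first block to collide with $x_{i_0}^{(0)}>0$. The only cosmetic difference is that the paper splits off the case $\bar t\in[0,1)$ explicitly and, in the final step, solves the same identity for $x_{i_0}^{(0)}$ (obtaining $x_{i_0}^{(0)}\le0$) rather than for $y_{1,i_0}^k$; the contradictions are algebraically equivalent.
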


\begin{proof}
Note that $0$ is a regular value of $H(u,u^{(0)},t)$ by theorem \ref{reg}. By contradiction we assume that $\Gamma_u^{(0)} \subset \mathcal{R}_1 \times (0,1]$ is an unbounded curve. Then there exists a sequence of points $\{v^k\},$ where $v^k=(u^k, t^k) \subset \Gamma_u^{(0)}$ such that $\|(u^k, t^k)\| \to \infty.$ As $(0,1]$ is a bounded set and $x$ component and $y_2$ component of $\Gamma_u^{(0)}$ is bounded, there exists a subsequence of points  $\{v^k\}=\{(u^k, t^k)\}=\{x^k,y_1 ^k,y_2 ^k, t^k\}$ such that $x^k \to \bar{x},\ y_2 ^k \to \bar{y_2}, \ t^k \to \bar{t} \in [0,1] \ \text{and} \ \|y^k\| \to \infty \ \text{as} \ k \to \infty, \ \text{where} \ y^k=\left[\begin{array}{c} y_1 ^k\\ y_2 ^k\\\end{array}\right].$ Since $\Gamma_u^{(0)} \subset H^{-1}(0),$ we have
\begin{equation}\label{zzq}
(1-t^k)[(A+A^T)x^k+q-y_1 ^k-A^T y_2 ^k]+t^k(x^k-x^{(0)})=0 
\end{equation}	
\begin{equation}\label{yyq}
Y_1 ^k x^k-t^k Y_1 ^{(0)}x^{(0)}+(1-t^k)X^k(Ax^k+q)=0
\end{equation}
\begin{equation}\label{wwwq}
Y_2 ^k(Ax^k+q)-t^k Y_2 ^{(0)}(Ax^{(0)}+q)=0
\end{equation}
where $Y_1 ^k=\text{diag}(y_1 ^k), X^k=\text{diag}(x^k)$ and $Y_2 ^k=\text{diag}(y_2 ^k).$\\
\vsp
\NI Let
$\bar{t} \in [0,1], \|y_1 ^k\|=\infty$ and $\|y_2 ^k\|<\infty$ as $k \to \infty.$
Then $\exists \ i \in \{1,2,\cdots, n\}$ such that $y_{1i} ^k \to \infty$ as $k \to \infty.$ Let $I_{1y}=\{i\in\{1,2,\cdots n\} : \lim\limits_{k\to \infty}y_{1i} ^k = \infty\}.$ When $\bar{t} \in [0,1),$ for $i\in I_{1y}$ we write from equation \ref{zzq},\\ $(1-t^k)[((A+A^T)x^k)_i+q_i-y_{1i} ^k-(A^T y_{2} ^k)_i] + t^k(x_i ^k-x_i ^{(0)})=0$\\
$\implies (1-t^k)y_{1i} ^k=(1-t^k)[((A+A^T)x^k)_i+q_i-(A^T y_{2} ^k)_i]+t^k(x_i ^k-x_i ^{(0)}) \\ \implies y_{1i} ^k=[((A+A^T)x^k)_i+q_i-(A^T y_{2} ^k)_i]+\frac{t^k}{(1-t^k)}(x_i ^k-x_i ^{(0)}).$\\ As $k \to \infty$ right hand side is bounded, but left hand side is unbounded. It contradicts that $\|y_1 ^k\|=\infty.$\\ When $\bar{t}=1,$  from equation \ref{yyq}, we obtain, $x_i ^k=\frac{t^k y_{1i} ^{(0)}x_i ^{(0)}}{y_{1i} ^k}$ for $i \in I_{1y}.$ As $k \to \infty, x_i ^k \to 0.$ \\ Again from equation \ref{zzq}, we obtain	$x_i ^{(0)}=\frac{(1-t^k)}{t^k}[((A+A^T)x^k)_i+q_i-y_{1i} ^k-(A^T y_2 ^k)_i]+x_i ^k$ for $i \in I_{1y}.$ As $k \to \infty,$ we have  $x_i ^{(0)}=-\lim\limits_{k\to \infty}\frac{(1-t^k)}{t^k}y_{1i} ^k \leq 0.$ It contradicts that $\|y_1 ^k\|=\infty.$\\ So  $\Gamma_u^{(0)}$ is a bounded curve in $\mathcal{R}_1 \times (0,1].$  
\end{proof}

Therefore the boundedness of the sequences $\{x_k\}$ and $\{y_2 ^k\}$ gurantee the boundedness of the sequence $\{y_1 ^k\},$ i.e. the boundedness of the sequence $\{v^k\}.$  

\begin{theorem}\label{001}
	Suppose the solution set $\Gamma_u^{(0)}$ of the homotopy function $H(u,u^{(0)},t)=0$ is unbounded for $t\in [0,1)$. Then there exists $(\xi, \eta, \zeta) \in R_+^{3n}$ such that $e^t \xi =1,$  $\xi^tA\xi \leq 0.$
\end{theorem}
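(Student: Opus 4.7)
The plan is to take an unbounded sequence on $\Gamma_u^{(0)}$ with $t^k\in[0,1)$, normalize appropriately so that a subsequence converges to a nontrivial limit $(\xi,\eta,\zeta)\in R_+^{3n}$ with $e^T\xi=1$, and then extract the inequality $\xi^TA\xi\leq 0$ from the summed version of the second block of $H$.

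First I would pick $\{(x^k,y_1^k,y_2^k,t^k)\}\subset\Gamma_u^{(0)}$ with $t^k\in[0,1)$ and $\|(x^k,y_1^k,y_2^k)\|\to\infty$. After subsequencing assume $t^k\to\bar{t}\in[0,1]$. By the contrapositive of Theorem~\ref{bnd}, at least one of $\{x^k\}$, $\{y_2^k\}$ is unbounded; I would focus on the case $e^Tx^k\to\infty$ (the other sub-case reduces to this one because equation~(3) of $H$, $Y_2(Ax+q)=tY_2^{(0)}(Ax^{(0)}+q)$, forces $y_2^k$-blow-up to be driven by components of $Ax^k+q$ tending to zero, which in turn requires $\{x^k\}$ unbounded). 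Set $\lambda^k:=e^Tx^k$ and define
\[
\xi^k:=x^k/\lambda^k,\qquad \eta^k:=y_1^k/\lambda^k,\qquad \zeta^k:=y_2^k/\lambda^k.
\]
Then $\xi^k$ lies on the unit simplex, so a subsequence converges to $\xi\geq 0$ with $e^T\xi=1$. Boundedness (hence convergence along a further subsequence) of $\eta^k$ and $\zeta^k$ follows from solving the first block of $H$ for $y_1$ in terms of $x,y_2$ and using equation~(3) to control $y_2^k/\lambda^k$; this yields $(\xi,\eta,\zeta)\in R_+^{3n}$.

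The core calculation is to apply $e^T$ (i.e.\ sum over components) to the second block $Y_1x-tY_1^{(0)}x^{(0)}+(1-t)X(Ax+q)=0$, producing the scalar identity
\[
y_1^Tx+(1-t)\bigl[x^TAx+x^Tq\bigr]=t\,(y_1^{(0)})^Tx^{(0)}.
\]
Dividing by $(\lambda^k)^2$ and letting $k\to\infty$, the right-hand side and $x^Tq/(\lambda^k)^2$ vanish, while $(y_1^k)^Tx^k/(\lambda^k)^2\to\eta^T\xi\geq 0$, leaving
\[
\eta^T\xi+(1-\bar{t})\,\xi^TA\xi=0,
\]
so $(1-\bar{t})\,\xi^TA\xi\leq 0$. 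To finish, I would rule out $\bar{t}=1$ using the Implicit Function Theorem remark after Theorem~\ref{reg}: near $t=1$ the curve is smooth and locally confined to a neighborhood of $u^{(0)}$, so no divergent subsequence can cluster at $\bar{t}=1$. Hence $\bar{t}\in[0,1)$, $1-\bar{t}>0$, and therefore $\xi^TA\xi\leq 0$.

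The main obstacle I expect is the very first step: Theorem~\ref{bnd} only guarantees that $\{x^k\}$ or $\{y_2^k\}$ is unbounded, whereas the normalization $e^T\xi=1$ demanded by the conclusion forces us into the $x$-unbounded case. Justifying that $x^k$ must also blow up whenever $y_2^k$ does (by exploiting equations~(2) and~(3) together with the positivity built into $\mathcal{R}_1$) is the delicate step; once it is handled, the normalization, compactness, and summation steps are routine.
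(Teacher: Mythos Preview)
Your core argument is correct but takes a different route from the paper. The paper proceeds by a four-way case split (on whether $\{y_2^k\}$ is bounded and on whether $e^Tx^k\to\infty$ or $(1-t^k)e^Tx^k\to\infty$), and in each case passes to the limit in \emph{all three} blocks of $H$ to obtain relations such as $(1-\bar t)[(A+A^T)\xi-\eta-A^T\zeta]+\bar t\,\xi=0$, $\xi_i\eta_i+\xi_i(A\xi)_i=0$, $\zeta_i(A\xi)_i=0$, which it then multiplies through by $\xi^T$ and combines algebraically to extract $\xi^TA\xi\leq 0$. Your approach is more economical: summing only the second block gives the scalar identity $(y_1^k)^Tx^k+(1-t^k)\bigl[(x^k)^TAx^k+(x^k)^Tq\bigr]=t^k(y_1^{(0)})^Tx^{(0)}$, and after dividing by $(e^Tx^k)^2$ the inequality $(1-\bar t)\,\xi^TA\xi\leq 0$ falls out directly from nonnegativity of $(y_1^k)^Tx^k$, without invoking the first or third block and without needing $\eta^k,\zeta^k$ to converge (they are unconstrained in the conclusion anyway). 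Concerning the obstacle you flag: the paper's Case~2 also tacitly assumes $e^Tx^k\to\infty$ whenever $\{y_2^k\}$ is unbounded (both of its subcases presuppose it), so your reduction is no less complete than the original; be aware, though, that your assertion ``$(Ax^k+q)_i\to 0$ forces $\{x^k\}$ unbounded'' is not valid in general, and the paper offers no independent argument for this point either. Finally, where you exclude $\bar t=1$ via the local uniqueness remark following the regularity theorem, the paper instead simply takes $\bar t\in[0,1)$ at the outset and defers $\bar t=1$ to the corollary that follows.
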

\begin{proof}
	Let assume that the solution set $\Gamma_u^{(0)}$ is unbounded for $t\in [0,1)$. Then there exists a sequence of points $\{v^k\} \subset \Gamma_u^{(0)} \subset \mathcal{R}_1 \times [0,1),$ where $v^k=(u^k,t^k)=(x^k,y_1^k,y_2^k, t^k)$ such that  $\lim_{k\to \infty}t^k=\bar{t} \in [0,1)$. Now we consider following two cases. \\
	\vsp
	\NI Case 1:  $\|y_2^k\|<\infty$ as $k \to \infty$. Since the solution set $\Gamma_u^{(0)}$ is unbounded we consider the following two subcases.\\ \NI Subcase (i) $\lim_{k\to \infty}e^tx^k=\infty:$\\   Let $\lim_{k\to \infty}\frac{x^k}{e^tx^k}=\xi \geq 0 $ and $\lim_{k\to  \infty}\frac{y_1^k}{e^tx^k}=\eta \geq 0. $ So it is clear that $e^t\xi=1.$ Then dividing by $e^tx^k$ and taking limit $k \to \infty $ from equations \ref{zzq}, \ \ref{yyq}, \ \ref{wwwq} we write\\
	 \begin{eqnarray}
	(1-\bar{t})[(A+A^T)\xi - \eta]+\bar{t}\xi=0\label{n1}\\
	\xi_i \eta_i+\xi_i(A\xi)_i=0 \ \forall \ i \label{n2}
		\end{eqnarray}
		From equations \ref{n1} and \ref{n2} we write $\eta=(A+A^T)\xi+\frac{\bar{t}}{(1-\bar{t})}\xi$ and $ -\xi^TA\xi=\xi^T\eta.$ These two imply that $\xi^T[(A+A^T)\xi+\frac{\bar{t}}{(1-\bar{t})}\xi]=\xi^T\eta=-\xi^TA\xi$ \ for $\bar{t} \in [0,1).$ This implies that $2\xi^TA\xi+\xi^TA^T\xi=-\frac{\bar{t}}{(1-\bar{t})}\xi \leq 0$  i.e. $\xi^TA\xi\leq 0 $ for $\bar{t} \in [0,1).$ \\ Specifically for $\bar{t}=0,$ $\xi^TA\xi= 0$ and for $\bar{t}\in (0,1),$  $\xi^TA\xi<0.$ \\  
	\vsp
		\NI	Subcase (ii) $\lim_{k\to \infty}(1-t^k)e^tx^k= \infty:$\\
	 Let $\lim_{k\to \infty}\frac{(1-t^k)x^k}{(1-t^k)e^tx^k}=\xi'\geq 0.$ Then $e^t\xi'=1.$  Let  $\lim_{k\to \infty}\frac{y_1^k}{(1-t^k)e^tx^k}=\eta' \geq 0.$ Then multiplying the equation \ref{zzq} with $(1-t^k)$ and dividing by $(1-t^k)e^tx^k$, multiplying the equation \ref{yyq} with $(1-t^k)$ and dividing by $((1-t^k)e^tx^k)^2$ and  multiplying the equation \ref{wwwq} with $(1-t^k)$ and dividing by $((1-t^k)e^tx^k)^2$ and taking limit $k \to \infty$, we write
		\begin{eqnarray}
			(1-\bar{t})[(A+A^T)\xi' - (1-\bar{t})\eta']+\bar{t}\xi'=0\label{n11}\\
		\xi'_i \eta'_i+\xi'_i(A\xi')_i=0 \ \forall \ i \label{n22}
		\end{eqnarray}
	Multiplying $(\xi')^T $ in both sides of equation \ref{n11}, we have $(\xi')^t(A+A^T)\xi'- (1-\bar{t})(\xi')^T\eta'=-\frac{\bar{t}}{(1-\bar{t})}(\xi')^T\xi'.$ Now using equation \ref{n22}, we write $(\xi')^T(A+A^T)\xi'+ (1-\bar{t})(\xi')^TA\xi'=-\frac{\bar{t}}{(1-\bar{t})}(\xi')^T\xi' \implies (\xi')^TA^t\xi'+(2-\bar{t})(\xi')^TA\xi'= -\frac{\bar{t}}{(1-\bar{t})}(\xi')^T\xi'$ for $\bar{t} \in [0,1).$ Hence $(3-\bar{t})(\xi')^TA\xi'= -\frac{\bar{t}}{(1-\bar{t})}(\xi')^T\xi'  \implies (\xi')^TA\xi' = -\frac{\bar{t}}{(1-\bar{t})(3-\bar{t})}(\xi')^T\xi'\leq 0.$ So we have $(\xi')^TA\xi'\leq 0$ for $\bar{t} \in [0,1).$ \\Specifically for $\bar{t}=0,$ $(\xi')^TA\xi'= 0$ and for $\bar{t}\in (0,1),$  $(\xi')^TA\xi'<0.$ \\
	
	\vsp
	\NI Case 2:  $\lim_{k\to \infty}e^ty_2^k=\infty$. Since the solution set of $\Gamma_u^{(0)}$ is unbounded we consider following two subcases.\\ \NI Subcase (i) $\lim_{k\to \infty}e^tx^k=\infty$: \\	
		Let $\lim_{k\to \infty}\frac{x^k}{e^Tx^k}=\xi \geq 0, $  $\lim_{k\to  \infty}\frac{y_1^k}{e^Tx^k}=\eta \geq 0 $ and $\lim_{k\to  \infty}\frac{y_2^k}{e^Tx^k}=\zeta \geq 0.$ It is clear that $e^T\xi=1.$ Then dividing by $e^Tx^k$ and taking limit $k \to \infty $ from equation \ref{zzq}, dividing by $(e^Tx^k)^2$ and taking limit $k \to \infty $ from equation \ref{yyq}, \ref{wwwq}, we write
		\begin{eqnarray}
		(1-\bar{t})[(A+A^T)\xi-\eta-A^T\zeta]+\bar{t}\xi=0 \label{nn1}\\
		\xi_i\eta_i+\xi_i(A\xi)_i=0 \ \forall \ i \label{nn2}\\
		\zeta_i(A\xi)_i=0 \ \forall \ i \label{nn3}
		\end{eqnarray} 
		 From equation \ref{nn1}, we have $\eta+A^T\zeta=(A+A^T)\xi+\frac{\bar{t}}{1-\bar{t}}\xi$ for $\bar{t} \in [0,1).$ Now multiplying $\xi^T$ in both sides we get $\xi^T(A+A^T)\xi+\frac{\bar{t}}{1-\bar{t}}\xi^T\xi=\xi^T\eta + \xi^TA^T\zeta.$ From equations \ref{nn2} and \ref{nn3}, we write $\xi^T(A+A^T)\xi+\frac{\bar{t}}{1-\bar{t}}\xi^T\xi= -\xi^TA\xi.$ Hence $\xi^TA\xi+\xi^T(A+A^T)\xi=-\frac{\bar{t}}{1-\bar{t}}\xi^T\xi \leq 0$ for $\bar{t} \in [0,1).$ Therefore $\xi^TA\xi \leq 0$ for $\bar{t} \in [0,1).$ \\ Specifically for $\bar{t}=0,$ $\xi^TA\xi= 0$ and for $\bar{t}\in (0,1),$  $\xi^TA\xi<0.$ \\ 
		 \vsp
	
\NI	Subcase(ii) $\lim_{k\to \infty}(1-t^k)e^tx^k= \infty:$ \\ Let $\lim_{k\to \infty}\frac{(1-t^k)x^k}{(1-t^k)e^Tx^k}=\xi'\geq 0.$ Then $e^T\xi'=1.$  Let  $\lim_{k\to \infty}\frac{y_1^k}{(1-t^k)e^Tx^k}=\eta' \geq 0$ and $\lim_{k\to \infty}\frac{y_2^k}{(1-t^k)e^Tx^k}=\zeta' \geq 0$  Then multiplying the equation \ref{zzq} with $(1-t^k)$ and dividing by $(1-t^k)e^Tx^k$, multiplying the equation \ref{yyq} with $(1-t^k)$ and dividing by $((1-t^k)e^Tx^k)^2$ and  multiplying the equation \ref{wwwq} with $(1-t^k)$ and dividing by $((1-t^k)e^Tx^k)^2$ and taking limit $k \to \infty,$ we obtain
		 \begin{eqnarray}
		 (1-\bar{t})(A+A^T)\xi'- (1-\bar{t})^2\eta'- (1-\bar{t})^2A^T\zeta'+\bar{t}\xi'=0 \label{nnn1}\\
		 \xi'_i\eta'_i+\xi'_i(A\xi')_i=0 \ \forall \ i \label{nnn2}\\
		 \zeta'_i(A\xi')_i=0 \ \forall \ i \label{nnn3}
		 \end{eqnarray}
		 Multiplying $(\xi')^T$ in both side of equation \ref{nnn1}, we have $(\xi')^T(A+A^T)\xi'- (1-\bar{t})(\xi')^T\eta'- (1-\bar{t})(\xi')^TA^T\zeta'=-\frac{\bar{t}}{(1-\bar{t})}(\xi')^T\xi'$. Now from equations \ref{nnn2} and \ref{nnn3}, we write $(\xi')^T(A+A^T)\xi'+ (1-\bar{t})(\xi')^TA\xi'=(\xi')^TA^T\xi'+(2-\bar{t})(\xi')^T\xi'=(3-\bar{t})(\xi')^TA\xi' =-\frac{\bar{t}}{(1-\bar{t})}(\xi')^T\xi'\implies (\xi')^TA\xi' = -\frac{\bar{t}}{(1-\bar{t})(3-\bar{t})}(\xi')^T\xi'\leq 0$ for $\bar{t} \in [0,1).$\\ Specifically for $\bar{t}=0,$ $(\xi')^TA\xi'= 0$ and for $\bar{t}\in (0,1),$  $(\xi')^TA\xi'<0.$\\
		 
			 Hence considering all the cases it is proved that the unboundedness of the solution set $\Gamma_u^{(0)}$ of the homotopy function $H(u,u^{(0)},t)=0$ leads to the existence of  $(\xi, \eta, \zeta) \in R_+^{3n}$ such that $e^t \xi =1,$  $\xi^tA\xi \leq 0$ for $t\in[0,1).$

		\end{proof}

	\begin{corol}\label{00}
		For $\bar{t}=1$, the homotopy curve is bounded. 
		\end{corol}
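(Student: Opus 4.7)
The plan is to exploit the fact that the starting point $(u^{(0)},1)$ is the unique zero of $H(\cdot,u^{(0)},1)$, so as $t\to 1$ the curve cannot escape to infinity. The corollary then becomes a direct consequence of Theorem \ref{reg} combined with the Implicit Function Theorem remark stated immediately after it.

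First I would substitute $t=1$ directly into the homotopy equations \ref{zzq}, \ref{yyq}, \ref{wwwq}. The first equation collapses to $x-x^{(0)}=0$, which forces $x=x^{(0)}$. Plugging this into the second equation gives $Y_1 x^{(0)}-Y_1^{(0)} x^{(0)}=0$, so componentwise $(y_{1i}-y_{1i}^{(0)})x_i^{(0)}=0$; since $u^{(0)}\in\mathcal{R}_1$ implies $x^{(0)}>0$ coordinatewise, we conclude $y_1=y_1^{(0)}$. Similarly, the third equation becomes $(y_{2i}-y_{2i}^{(0)})(Ax^{(0)}+q)_i=0$, and because $u^{(0)}\in\mathcal{R}_1$ also gives $Ax^{(0)}+q>0$, we obtain $y_2=y_2^{(0)}$. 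Hence the only point of $\Gamma_u^{(0)}$ at parameter value $t=1$ is the (finite) initial point $u^{(0)}$.

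Next I would invoke Theorem \ref{reg}, which establishes that $0$ is a regular value of $H$ and that $\Gamma_u^{(0)}$ is a smooth one-dimensional manifold starting at $(u^{(0)},1)$. The remark following Theorem \ref{reg} already points out that by the Implicit Function Theorem the curve is, in a neighbourhood of $(u^{(0)},1)$, the unique smooth solution parameterized by $t$. Consequently, any sequence $\{(u^k,t^k)\}\subset \Gamma_u^{(0)}$ with $t^k\to 1$ must, by continuity and the uniqueness just established at $t=1$, satisfy $u^k\to u^{(0)}$, and is therefore bounded.

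The main obstacle, which is mild, is justifying that no other branch of $\Gamma_u^{(0)}$ can accumulate at $\bar t=1$. This is handled by the classification result Lemma \ref{cl}: a one-dimensional smooth manifold is diffeomorphic either to a circle or to an interval, and the local Implicit Function Theorem argument identifies the endpoint corresponding to $t=1$ with $u^{(0)}$. Together with the computation above, this rules out escape to infinity as $t\to 1$ and yields the boundedness claim.
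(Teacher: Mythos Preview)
Your computation that $H(u,u^{(0)},1)=0$ has the unique solution $u=u^{(0)}$ is correct, and this observation is indeed used elsewhere (Theorem \ref{3.7}, case (i)). However, the step from this uniqueness to boundedness is not justified. You claim that any sequence $(u^k,t^k)\in\Gamma_u^{(0)}$ with $t^k\to 1$ must satisfy $u^k\to u^{(0)}$ ``by continuity and the uniqueness just established at $t=1$'', but uniqueness of the zero at the level $t=1$ does not by itself exclude $\|u^k\|\to\infty$. Concretely, $H(u^k,t^k)=0$ together with $t^k\to 1$ would force $H(u^k,1)\to 0$ only if $H$ were continuous in $t$ \emph{uniformly} in $u$; this fails on unbounded sets, since for example the first block of $H(u,t)-H(u,1)$ equals $(1-t)\bigl[(A+A^T)x+q-y_1-A^Ty_2-(x-x^{(0)})\bigr]$, which need not vanish when $(1-t^k)\|u^k\|$ does not. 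Your appeal to Lemma \ref{cl} does not close the gap either: even if $\Gamma_u^{(0)}$ is a single arc with one end at $(u^{(0)},1)$, nothing in the classification prevents the curve from descending in $t$ and then returning toward $t=1$ with $\|u\|\to\infty$ along its other end.

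The paper proceeds instead by a rescaling (blow-up) argument, reusing the limit relations \ref{n1}, \ref{n11}, \ref{nn1}, \ref{nnn1} derived in the proof of Theorem \ref{001}. Assuming an unbounded sequence with $t^k\to 1$, one sets $\xi=\lim x^k/(e^Tx^k)$ (respectively $\xi'$), so that $e^T\xi=1$; substituting $\bar t=1$ into those relations, the factor $(1-\bar t)$ kills every term except $\bar t\,\xi$, giving $\xi=0$ and contradicting $e^T\xi=1$. This is what actually rules out escape to infinity at $\bar t=1$. If you wish to salvage your route, you would need an additional a priori estimate (a properness-type bound on $\|u\|$ in terms of $\|H(u,t)\|$ uniformly for $t$ near $1$), which is in effect what the paper's rescaling supplies.
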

	\begin{proof}
	    Consider that the homotopy curve is unbounded in the neighbourhood of $\bar{t}=1$. Then there exists a sequence of points $\{v^k\} \subset \Gamma_u^{(0)} \subset \mathcal{R}_1 \times [0,1),$ where $v^k=(u^k,t^k)=(x^k,y_1^k,y_2^k, t^k)$ such that  $\lim_{k\to \infty}t^k=\bar{t}=1$. Now we consider following two cases.\\
	    Case 1: $\lim_{k\to \infty}e^tx^k=\infty.$  \\	Case 2: $\lim_{k\to \infty}(1-t^k)e^tx^k= \infty$.\\
	    
	     \NI  Case 1. Let $\lim_{k\to \infty}e^tx^k=\infty$ and $\lim_{k\to \infty}\frac{x^k}{e^Tx^k}=\xi \geq 0. $ Hence $e^t\xi=1.$ If $\|y_2^k\|<\infty$ as $k \to \infty,$ then from equation \ref{n1}, we get $\xi=0$ for $\bar{t}=1.$ If $\lim_{k\to \infty}e^ty_2^k=\infty,$ then from equation \ref{nn1}, we get $\xi=0$ for $\bar{t}=1.$ This contradicts that $e^t\xi=1.$ 
	       
\NI Case 2. Let $\lim_{k\to \infty}(1-t^k)e^tx^k= \infty$ and $\lim_{k\to \infty}\frac{(1-t^k)x^k}{(1-t^k)e^Tx^k}=\xi'\geq 0.$ Hence $e^t\xi'=1.$ If $\|y_2^k\|<\infty$ as $k \to \infty,$ then from equation \ref{n11}, we get $\xi'=0$ for $\bar{t}=1.$ If $\lim_{k\to \infty}e^ty_2^k=\infty,$ then from equation \ref{nnn1}, we get $\xi'=0$ for $\bar{t}=1.$ This contradicts that $e^t\xi'=1.$ 
	  
     \NI Therefore the homotopy curve is  bounded for $\bar{t}=1.$ 
	\end{proof}
 
\begin{theorem}
	Let $A\in R^{n\times n}$ be a matrix. If the set  $\mathcal{R}_1$ be nonempty and $0$ is a regular value of $H(u,u^{(0)},t),$ then the homotopy path $\Gamma_u^{(0)} \subset \mathcal{R}_1 \times (0,1]$ is bounded.
\end{theorem}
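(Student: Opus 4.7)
The plan is to argue by contradiction. Assume $\Gamma_u^{(0)}$ is unbounded in $\mathcal{R}_1\times(0,1]$, so that we can extract a sequence $\{(u^k,t^k)\}\subset\Gamma_u^{(0)}$ with $\|(u^k,t^k)\|\to\infty$; since $t^k\in(0,1]$ is bounded, pass to a subsequence with $t^k\to\bar{t}\in[0,1]$. The aim is to rule out both $\bar{t}=1$ and $\bar{t}\in[0,1)$, thereby contradicting the unboundedness assumption. The case $\bar{t}=1$ is dispatched immediately by Corollary \ref{00}, which already establishes that the portion of the homotopy curve in a neighbourhood of $\bar{t}=1$ is bounded.

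For $\bar{t}\in[0,1)$, I would invoke Theorem \ref{001}: after extracting the appropriate subsequence (distinguishing the subcases $e^Tx^k\to\infty$ versus $(1-t^k)e^Tx^k\to\infty$, and $\|y_2^k\|<\infty$ versus $e^Ty_2^k\to\infty$), one obtains a triple $(\xi,\eta,\zeta)\in R_+^{3n}$ satisfying $e^T\xi=1$ and $\xi^TA\xi\leq 0$, together with the complementarity identities $\xi_i\eta_i+\xi_i(A\xi)_i=0$ and $\zeta_i(A\xi)_i=0$ for all $i$, and a limit form of the homotopy equation such as $(A+A^T)\xi+\frac{\bar{t}}{1-\bar{t}}\xi=\eta+A^T\zeta$. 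Summing the complementarity identities yields $\xi^T\eta=-\xi^TA\xi$ and $\zeta^TA\xi=0$, which, after premultiplying the limit equation by $\xi^T$, collapse to $3(1-\bar{t})\,\xi^TA\xi=-\bar{t}\,\xi^T\xi$, so $\xi^TA\xi=0$ when $\bar{t}=0$ and $\xi^TA\xi<0$ when $\bar{t}\in(0,1)$.

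The crux is then to use $\mathcal{R}_1\neq\emptyset$ to rule out the existence of such a triple. Pick any $(\hat{x},\hat{y}_1,\hat{y}_2)\in\mathcal{R}_1$; then $\hat{x}>0$ and $A\hat{x}+q>0$. The plan is to premultiply the limit equation by $\hat{x}^T>0$ and exploit the two complementarity identities to eliminate $\eta$ and $\zeta$, reducing everything to an inequality involving $\hat{x}^TA\xi$, $\hat{x}^T\xi$, and $\xi^TA\hat{x}$. Since $\hat{x}^T\xi>0$ (because $\hat{x}>0$, $\xi\geq 0$, and $\xi\neq 0$), the resulting inequality should be forced into a sign inconsistent with $A\hat{x}+q>0$. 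The main obstacle, I expect, is precisely this last step: $\xi^TA\xi\leq 0$ with $\xi\geq 0$ is by itself a weak structural conclusion on $A$, and the delicate part is extracting the contradiction from the strict-feasibility hypothesis $\hat{x}>0,\ A\hat{x}+q>0$ alone, without any semi-definiteness assumption on $A$; the boundary case $\bar{t}=0$ (where only $\xi^TA\xi=0$ holds) will likely need a slightly separate Farkas-type/alternative-theorem argument from the interior case $\bar{t}\in(0,1)$ (where $\xi^TA\xi<0$), but both follow the same overall scheme of pairing with $\hat{x}$ and exploiting strict positivity.
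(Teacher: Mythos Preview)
Your overall architecture matches the paper exactly: argue by contradiction, dispose of $\bar t=1$ via Corollary~\ref{00}, and for $\bar t\in[0,1)$ invoke Theorem~\ref{001} to produce $\xi\in R_+^n$ with $e^T\xi=1$ and $\xi^TA\xi\le 0$ (strictly negative on $(0,1)$). Where you diverge from the paper is in the final step, and that is where your proposal has a real gap.

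You plan to exploit $\mathcal R_1\ne\emptyset$ by fixing a single strictly feasible point $\hat x$ and premultiplying the limit relation $(A+A^T)\xi+\frac{\bar t}{1-\bar t}\xi=\eta+A^T\zeta$ by $\hat x^T$. But the complementarity identities $\xi_i\eta_i+\xi_i(A\xi)_i=0$ and $\zeta_i(A\xi)_i=0$ are tied to $\xi$, not to $\hat x$; they give you no control over $\hat x^T\eta$ or $\zeta^TA\hat x$, so the elimination you describe does not go through. Your own hesitation (``the main obstacle \ldots is precisely this last step'') is justified: without further structure on $A$ this route does not close.

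The paper's key observation is much simpler and bypasses all of this. The vector $\xi$ is not an abstract object produced by Theorem~\ref{001}; it is the normalized limit $\xi=\lim_{k\to\infty}x^k/(e^Tx^k)$ (or the $(1-t^k)$-weighted variant) of the \emph{sequence itself}. Since every $x^k$ lies in $\mathcal R$, one has $Ax^k+q>0$ for all $k$; dividing by $e^Tx^k\to\infty$ and passing to the limit yields $A\xi\ge 0$ directly. Then $\xi\ge 0$ and $A\xi\ge 0$ force $\xi^TA\xi\ge 0$, contradicting $\xi^TA\xi<0$ for $\bar t\in(0,1)$. In short, you should use the strict feasibility of the \emph{moving} points $x^k$ rather than of a fixed auxiliary $\hat x$; that is the missing idea.
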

\begin{proof}
Suppose $A\in R^{n\times n}$ is a matrix and  there exists a sequence of points $\{v^k\} \subset \Gamma_u^{(0)} \subset \mathcal{R}_1 \times (0,1],$ where $v^k=(x^k,y_1^k,y_2^k, t^k).$ Hence by the definition of $\mathcal{R}_1$  $x^k,y_1^k,y_2^k,\\Ax^k+q>0.$ From corollary \ref{00} the homotopy curve is bounded for $\lam=1.$ Assume that the homotopy curve $\Gamma_u^{(0)} \subset \mathcal{R}_1 \times (0,1)$ is unbounded. Then from theorem \ref{001}   $(\xi)^TA\xi<0$ for $t\in (0,1).$ But $Ax^k+q>0$ implies that $A\xi \geq 0,$ where $\xi=$ $\lim_{k\to \infty}\frac{x^k}{e^Tx^k} \geq 0$ for $\lim_{k\to \infty}{e^Tx^k}=\infty,$ or $\xi=$ $\lim_{k\to \infty}\frac{(1-t^k)x^k}{(1-t^k)e^Tx^k}\geq 0$ for $\lim_{k\to \infty}{(1-t^k)e^Tx^k}=\infty.$ Hence $\xi, A\xi \geq 0$ imply that $\xi^TA\xi\geq 0$ for $t\in(0,1)$, which contradicts that the homotopy path is unbounded for $t\in(0,1).$ Hence the homotopy curve $\Gamma_u^{(0)} \subset \mathcal{R}_1 \times (0,1]$ is bounded.
\end{proof}
Therefore the homotopy curve $\Gamma_u^{(0)}$ is bounded for the parameter t starting from $1$ to $0$ if the set  $\mathcal{R}_1$ be nonempty and $0$ is a regular value of the homotopy function \ref{homf} .\\

For an initial point $u^{(0)}\in \mathcal{R}_1$ we obtain a smooth bounded homotopy path which  leads to  the solution of homotopy function \ref{homf} as the parameter $t \to 0.$ 
\begin{theorem}\label{3.7}
	For $u^{(0)}=(x^{(0)},y_1^{(0)},y_2^{(0)})\in \mathcal{R}_1,$ the homotopy equation finds a bounded smooth curve $\Gamma_u^{(0)} \subset \mathcal{R}_1 \times (0,1]$ which starts from $(u^{(0)},1)$ and approaches the hyperplane at $t =0.$ As $t\to 0,$ the limit set $L \times \{0\} \subset \bar{\mathcal{R}}_1 \times \{0\}$ of $\Gamma_u^{(0)}$ is nonempty and every point in $L$ is a solution of the following system:
	\begin{equation}\label{sys}
	\begin{split}
	(A+A^T)x+q-y_1-A^Ty_2=0 \\
	Y_1x+X(Ax+q)=0  \\
		Y_2(Ax+q)=0. \\
	\end{split}
    \end{equation}
\end{theorem}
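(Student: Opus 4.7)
The plan is to combine the smoothness statement of Theorem \ref{reg} with the boundedness result just established, then apply the classification of one-dimensional manifolds (Lemma \ref{cl}) to force the homotopy curve to terminate on the hyperplane $t=0$, and finally read off the limiting system from the homotopy equation (\ref{homf}).

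First I would recall that by Theorem \ref{reg}, $0$ is a regular value of $H(\cdot,u^{(0)},\cdot)$ for almost every $u^{(0)} \in \mathcal{R}_1$, so by Lemma \ref{inv} the zero set $H^{-1}(0)$ is a collection of smooth $1$-dimensional manifolds, and the component $\Gamma_u^{(0)}$ through $(u^{(0)},1)$ is one of them. By the preceding theorem, $\Gamma_u^{(0)}$ is bounded in $\mathcal{R}_1 \times (0,1]$. By Lemma \ref{cl}, $\Gamma_u^{(0)}$ is therefore diffeomorphic either to the unit circle or to a unit interval.

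Next I would rule out the circle case by examining the Jacobian $\frac{\partial H}{\partial u}$ at $(u^{(0)},1)$. Substituting $t=1$ into (\ref{homf}) collapses the three blocks to $(x-x^{(0)})$, $Y_1 x - Y_1^{(0)} x^{(0)}$, $Y_2(Ax+q) - Y_2^{(0)}(Ax^{(0)}+q)$, whose Jacobian with respect to $(x,y_1,y_2)$ at $u^{(0)}$ is block-triangular with diagonal blocks $I$, $X^{(0)}$, and $Y^{(0)} = \mathrm{diag}(Ax^{(0)}+q)$. Since $u^{(0)} \in \mathcal{R}_1$ forces $x^{(0)}>0$ and $Ax^{(0)}+q>0$, this Jacobian is nonsingular. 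The Implicit Function Theorem then lets us locally parameterize $\Gamma_u^{(0)}$ by $t$ near the starting point, so $(u^{(0)},1)$ is a genuine boundary endpoint; hence the circle case is impossible and $\Gamma_u^{(0)}$ is diffeomorphic to an interval having $(u^{(0)},1)$ as one of its endpoints.

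Since $\Gamma_u^{(0)}$ is bounded, the other endpoint cannot go to infinity, and smoothness prevents it from lying in the interior of $\mathcal{R}_1 \times (0,1]$. So it must approach the boundary $\partial(\mathcal{R}_1 \times (0,1])$, which decomposes into the lateral part $\partial \mathcal{R}_1 \times (0,1]$ and the terminal part $\bar{\mathcal{R}}_1 \times \{0\}$. I would rule out the lateral part by a case analysis: on $\partial\mathcal{R}_1$ at least one of the coordinates of $x$, $y_1$, $y_2$, or $Ax+q$ vanishes, and for $t \in (0,1]$ the second and third blocks of (\ref{homf}) read $Y_1 x = t Y_1^{(0)} x^{(0)} - (1-t) X(Ax+q)$ and $Y_2(Ax+q) = t Y_2^{(0)}(Ax^{(0)}+q)$, whose right-hand sides have strictly positive components coming from $u^{(0)} \in \mathcal{R}_1$; this contradicts any coordinate of $y_2$ or of $Ax+q$ vanishing at $t>0$, and a parallel argument (using the combined structure of the first two blocks) excludes coordinates of $x$ or $y_1$ vanishing. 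Thus the terminal endpoint lies in $\bar{\mathcal{R}}_1 \times \{0\}$, so the limit set $L \times \{0\}$ is nonempty.

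Finally, substituting $t=0$ into (\ref{homf}) reduces it directly to the system (\ref{sys}); since any $(\bar{u},0) \in L \times \{0\}$ is a limit of points on $\Gamma_u^{(0)} \subset H^{-1}(0)$ with $t \to 0$, continuity of $H$ yields $H(\bar{u},u^{(0)},0)=0$, i.e.\ $\bar{u}$ solves (\ref{sys}). The main obstacle will be the careful exclusion of the lateral boundary: one must verify that every face $\{x_i=0\}$, $\{y_{1i}=0\}$, $\{y_{2i}=0\}$, and $\{(Ax+q)_i=0\}$ is incompatible with the three blocks of the homotopy for $t \in (0,1]$, leveraging the strict positivity of $u^{(0)}$ and $Ax^{(0)}+q$ in $\mathcal{R}_1$.
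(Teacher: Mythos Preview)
Your proposal is correct and follows essentially the same route as the paper: invoke the one-manifold classification (Lemma \ref{cl}), use nonsingularity of the Jacobian at $(u^{(0)},1)$ to exclude the circle, then do a case analysis on where the limit point $(\bar u,\bar t)$ can sit, ruling out $\partial\mathcal{R}_1\times(0,1]$ via the last two blocks of (\ref{homf}) together with boundedness, and finally letting $t\to 0$ in (\ref{homf}) to obtain (\ref{sys}). The only cosmetic difference is that the paper appeals to the nonsingularity of $\partial H/\partial u^{(0)}$ (already computed in Theorem \ref{reg}) rather than $\partial H/\partial u$; your choice is the more standard one for the implicit-function step, but the two matrices coincide up to sign at $t=1$, so the conclusions are the same.
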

\begin{proof}
  	Note that $\Gamma_u^{(0)}$ is diffeomorphic to a unit circle or a unit interval $(0,1]$ in view of lemma \ref{cl}. As $\frac{\partial{H(u,u^{(0)},1)}}{\partial{u^{(0)}}}$ is nonsingular, $\Gamma_u^{(0)}$ is diffeomorphic to a unit interval $(0,1].$ Again $\Gamma_u^{(0)}$ is a bounded smooth curve by the theorem \ref{bnd}. Let $(\bar{u},\bar{t})$ be a limit point of $\Gamma_u^{(0)}.$ Now consider four cases: $(i)(\bar{u},\bar{t})\in \mathcal{R}_1 \times \{1\}:$ As the equation $H(u,1)=0$ has only one solution $u^{(0)}\in  \mathcal{R}_1, $ this case is impossible.\\
  	$(ii)(\bar{u},\bar{t})\in \partial{\mathcal{R}_1} \times \{1\}:$ there exists a subsequence of $(u^k, t^k) \in \Gamma_u^{(0)}$ such that $x_i^k \to 0$ or $(Ax^k+q)_i \to 0$ for $i \subseteq \{1,2,\cdots n\}.$ From the last two equalities of the homotopy function \ref{homf}, we have $y_1^k \to \infty$ or $y_2^k \to \infty.$ Hence it contradicts the boundedness of the homotopy path by the theorem \ref{bnd}.\\
  	$(iii)(\bar{u},\bar{t})\in \partial{\mathcal{R}_1} \times (0,1):$ Also impossible followed by the case $(ii).$\\
  	$(iv)(\bar{u},\bar{t})\in \bar{\mathcal{R}}_1 \times \{0\}:$ The only possible case.\\

 \NI Hence $\bar{u}=(\bar{x},\bar{y_1},\bar{y_2})$ is a solution of the system \ref{sys}
 \begin{center}
     $(A+A^T)x+q-y_1-A^Ty_2=0$ \\ 
    $ Y_1x+X(Ax+q)=0  $\\ 
    $ Y_2(Ax+q)=0.$
 \end{center}
\end{proof}
Note that theorem  \ref{3.7} establishes the solution of the proposed  homotopy function which  validates the theorem \ref{3.1}. This in turn leads to the solution of discounted ARAT stochastic game.\\
In this approach the initial point $u^{(0)}=(x^{(0)},y_1^{(0)},y_2^{(0)})\in \mathcal{R}_1$  has to be a  feasible point. Hence choose the initial point such that  $x^{(0)}>0, \ Ax^{(0)}+q>0.$ Here $(\bar{u},0)$ is the solution of the homotopy function \ref{homf}. Therefore $\bar{u}\in \bar{R}_1$ is the solution of the system of equations \ref{sys}. Hence $\bar{Y}_1\bar{x}= 0$ and $\bar{X}(A\bar{x}+q)=0,$ where $\bar{Y}_1=$diag$(\bar{y}_1)$ and $\bar{X}=$diag$(\bar{x})$. It is clear that the component $\bar{x}$ of  $\bar{u}=(\bar{x},\bar{y_1},\bar{y_2})$ provides the solution  of discounted ARAT stochastic game.

\subsection{Tracing Homotopy Path}
  We trace the homotopy path $\Gamma_u^{(0)} \subset \mathcal{R}_1 \times (0,1]$ from the initial point $(u^{(0)},1)$ as $t \to 0.$ To find the solution of the discounted ARAT stochastic game we consider homotopy path along with other assumptions. Let $s$ denote the arc length of $\Gamma_u^{(0)}.$ We parameterize the homotopy path $\Gamma_u^{(0)}$ with respect to $s$ in the following form\\
 	\begin{equation}\label{ss}
 	H(u(s),t (s))=0, \ 
 	u(0)=u^{(0)}, \   t(0)=1.
 	\end{equation} 
 	The solution of the equation \ref{ss} satisfies the initial value problem 
 		\begin{equation}\label{sssss}
 	\Dot{u}=-\frac{\partial}{\partial u}H(u,t)^{-1}\frac{\partial}{\partial t}H(u,t), \ 	u(0)=u^{(0)}
 		\end{equation} 
   From equation \ref{homf} the choice of $H$ is $H(u,t)=(1-t)f(u)+tg(u)=0,$ where 
   
   $f(u)=\left[\begin{array}{c} 
		(A+A^T)x+q-y_1-A^Ty_2 \\
         Y_1x +X(Ax+q)\\
          Y_2(Ax+q)\\
	\end{array}\right]$ and $g(u)=\left[\begin{array}{c} 
		x-x^{(0)} \\
Y_1x- Y_1^{(0)}x^{(0)} \\
Y_2(Ax+q)- Y_2^{(0)}(Ax^{(0)} + q)\\ 
	\end{array}\right].$\\ Hence the system \ref{sssss} becomes \\
	\begin{center}
	    $	\Dot{u}=-((1-t)J_f+tJ_g)^{-1}(g(u)-f(u)), \ 	u(0)=u^{(0)}$ 
	    	\end{center} where $J_f$ and $J_g$ are Jacobian matrices of the functions $f$ and $g$.Hence $\Dot{u}=-\tilde{J}^{-1}\tilde{f}, $ where $ \tilde{J}= (1-t)J_f+tJ_g$ and $\tilde{f}=g(u)-f(u)$. \\ The initial value problem \ref{sssss} reduces to\\ 
	    	\begin{center}
	    	    $\Dot{u}=p(u,t), \ 	u(0)=u^{(0)}$ where $p(u,t)=-\tilde{J}^{-1}\tilde{f}$\end{center}
	    	    This problem will be solved by iterative process \\ \begin{center}
	    	        
	    	    $u_{(i+1)}=P(u_i,t_i,h_i),$ where $h_i=t_{i+1}-t_i.$ \end{center}
	    	    Here $u_i$ is an approximation of $u(s).$
	    	    $P(u_i,t_i,h_i)$ is given by  \\ 
	    	    \begin{center}
	    	    $P(u,t,h)=I_m(u,t,h),$ where $I_0(u,t,h)=u$  \\and $K_j=\frac{\partial}{\partial u}H(I_j,t+h)^+H(I_j,t+h)$\\ $L_j=I_j-K_j$\\  $KK_j=(\frac{\partial}{\partial u}H(L_j,t+h)+\frac{\partial}{\partial u}H(I_j,t+h))^+H(I_j,t+h)$\\ $LL_j=I_j-2*KK_j$
	    	     \end{center}
	    	     The next iteration 
	    	     \begin{center}
	    	  $I_{j+1}=LL_j-\frac{\partial}{\partial u}H(L_j,t+h)^+H(L_j,t+h),$ for $j=0,1,2,\cdots,m-1.$ 
	    	  \end{center}
	    	  Therefore in each step finding the value of $I_{j+1}$ for $m$ times we obtain the next iteration and iterative process will continue until the termination criteria is satisfied.
	\newpage
     
\begin{algorithm}
\caption{: \ Modified Homotopy Continuation Method}\label{allgo}

\NI \textbf{Step 0:} Initialize $(u^{(0)},t_0)$ and a natural number $m\in(0,50).$ Set $l_0 \in (0, 1).$ Choose $\epsilon_2 >> \epsilon_3 >> \epsilon_1 > 0$ which are very small positive quantity.
\vsp 
\NI \textbf{Step 1:} $\tau^{(0)}= \xi^{(0)}=(\frac{1}{n_0})\left[\begin{array}{c} 
s^{(0)}\\
-1\\
\end{array}\right]$ for $i=0,$ where $n_0=\|\left[\begin{array}{c} 
s^{(0)}\\
-1\\
\end{array}\right]\|$ and $s^{(0)}= (\frac{\partial H}{\partial u}(u^{(0)},t_0))^{-1}(\frac{\partial H}{\partial t}(u^{(0)},t_0)).$ \\ For $i>0,$ $s^{(i)}= (\frac{\partial H}{\partial u}(u^{(i)},t_i))^{-1}(\frac{\partial H}{\partial t}(u^{(i)},t_i)),$ $n_i=\|\left[\begin{array}{c} 
s^{(i)}\\
-1\\
\end{array}\right]\|,$ $\xi^{(i)}=(\frac{1}{n_i})\left[\begin{array}{c} 
s^{(i)}\\
-1\\
\end{array}\right]$.\\ 
If $\det  (\frac{\partial H}{\partial u}(u^{(i)},t_i))>0,$ $\tau^{(i)}= \xi^{(i)}$ else $\tau^{(i)}= -\xi^{(i)},$ $i \geq 1.$\\ Set $l=0.$
\vsp
\NI \textbf{Step 2:} (Predictor and corrector point calculation) $(\tilde{u}^{(i)},\tilde{t}_i)=(u^{(i)},t_i)+a\tau^{(i)},$ where $a={l_0}^l.$ Compute $(\hat{u}^{(i)},\hat{t}_{i})=H'_{u^{(0)}}(\tilde{u}^{(i)},\tilde{t}_i)^+ H(\tilde{u}^{(i)}, \tilde{t}_i)$ and $(\bar{u}^{(i)},\bar{t}_{i})=(\tilde{u}^{(i)},\tilde{t}_i)-(\hat{u}^{(i)},\hat{t}_{i}).$ Now compute $(\hat{uu}^{(i)},\hat{tt}_{i})=(H'_{u^{(0)}}(\tilde{u}^{(i)},\tilde{t}_i)+H'_{u^{(0)}}(\bar{u}^{(i)},\bar{t}_i))^+ H(\tilde{u}^{(i)}, \tilde{t}_i)$ and $(\bar{uu}^{(i)},\bar{tt}_{i})=(\tilde{u}^{(i)},\tilde{t}_i)-2(\hat{uu}^{(i)},\hat{tt}_{i}).$ \\Compute $(u^{(i+1)},t_{i+1})=(\bar{uu}^{(i)},\bar{tt}_{i})-H'_{u^{(0)}}(\bar{u}^{(i)},\bar{t}_i)^+H(\bar{uu}^{(i)},\bar{tt}_{i}).$ \\Repeat the method from the computation of $(\tilde{u}^{(i)},\tilde{t}_{i})$ to the computation of $(u^{(i+1)},t_{i+1})$ for $m$ times. In each step after repeating the computation for $m$ times, can obtain the value of next iteration. $(u^{(i+1)},t_{i+1})$  \\ If $0<\|{t_{i+1}-t_i}\|<1, $ go to step 3. Otherwise if $m' = \min(a,\|({u}^{(i+1)},{t}_{i+1})-({u}^{(i)},{t}_{i})\|)>a_0,$ update $l$ by $l+1,$ and recompute $(\tilde{t}_i, \hat{t}_{i})$ else go to step 3.
\vsp
\NI \textbf{Step 3:} Determine the norm $r=\|H(u^{(i+1)},t_{i+1})\|.$ If $r \leq 1$ and $u^{(i+1)}>0$ go to step 5, otherwise if $a > \epsilon_3,$ update $l$ by $l+1$ and go to step 2 else go to step 4.
\vsp
\NI \textbf{Step 4:} If $|t_{i+1} - t_i| < \epsilon_2,$ then if $|t_{i+1}| < \epsilon_2,$ then stop with the solution $(u^{(i+1)},t_{i+1}),$ else terminate (unable to find solution) else $i=i+1$ and go to step 1.
\vsp
\NI \textbf{Step 5:} If $|t_{i+1}| \leq \epsilon_1,$ then stop with solution $(u^{(i+1)},t_{i+1}),$ else $i=i+1$ and go to step 1.
\end{algorithm}

\NI Note that in step 2, $H'_{u^{(0)}}(u,t)^+ = H'_{u^{(0)}}(u,{t})^{T}(H'_{u^{(0)}}(u,{t})H'_{u^{(0)}}(u,t)^{T})^{-1}$ is the Moore-Penrose inverse of $H'_{u^{(0)}}(u,t).$ 
\vsp

 We prove the following theorem to obtain the positive direction of the proposed algorithm.

\begin{theorem} \label{direction}
If the homotopy curve $\Gamma_u^{(0)}$ is smooth, then the positive predictor direction $\tau^{(0)}$ at the initial point $u^{(0)}$ satisfies $\det \left[\begin{array}{c}
\frac{\partial H}{\partial u \partial t}(u^{(0)},1)\\
\tau ^{(0)^T}\\
\end{array}\right] < 0.$ 
\end{theorem}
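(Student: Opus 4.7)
The plan is to exploit the explicit form of $\tau^{(0)}$ given in Step 1 of Algorithm \ref{allgo} together with a Schur complement computation of the augmented determinant. Smoothness of $\Gamma_u^{(0)}$ at $(u^{(0)},1)$, combined with Theorem \ref{reg}, ensures that $\partial H/\partial u$ evaluated at $(u^{(0)},1)$ is nonsingular, so the tangent direction is uniquely defined up to sign. Writing $s^{(0)} = [\partial H/\partial u(u^{(0)},1)]^{-1}[\partial H/\partial t(u^{(0)},1)]$ and $n_0 = \|(s^{(0)T},-1)^T\|$, one has $\xi^{(0)} = \frac{1}{n_0}(s^{(0)T},-1)^T$, and a direct substitution shows $DH(u^{(0)},1)\,\xi^{(0)} = 0$, i.e.\ $\xi^{(0)}$ spans $\ker DH(u^{(0)},1)$.

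Next I would compute $\det(\partial H/\partial u)$ at $(u^{(0)},1)$ explicitly. At $t=1$ the homotopy reduces to $H(u,1)=g(u)$, and differentiating componentwise with respect to $u=(x,y_1,y_2)$ gives the block lower-triangular form
\begin{equation*}
\left.\frac{\partial H}{\partial u}\right|_{(u^{(0)},1)} = \begin{bmatrix} I & 0 & 0 \\ Y_1^{(0)} & X^{(0)} & 0 \\ Y_2^{(0)}A & 0 & D \end{bmatrix}, \qquad D=\text{diag}(Ax^{(0)}+q),
\end{equation*}
whose determinant equals $\det(X^{(0)})\det(D) = \prod_{i=1}^{n} x_i^{(0)}(Ax^{(0)}+q)_i$. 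Since $u^{(0)}\in\mathcal{R}_1$ we have $x^{(0)}>0$ and $Ax^{(0)}+q>0$, so this determinant is strictly positive.

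The core step is then to apply the Schur complement formula to
\begin{equation*}
M = \begin{bmatrix} \partial H/\partial u & \partial H/\partial t \\ (\tau^{(0)}_u)^T & \tau^{(0)}_t \end{bmatrix}.
\end{equation*}
Writing $\tau^{(0)} = \sigma\,\xi^{(0)}$ with $\sigma\in\{+1,-1\}$ and using $(\partial H/\partial u)^{-1}(\partial H/\partial t) = s^{(0)}$, a direct block computation yields
\begin{equation*}
\det M = \det\!\left(\frac{\partial H}{\partial u}\right)\!\left(\tau^{(0)}_t - (\tau^{(0)}_u)^T s^{(0)}\right) = -\frac{\sigma}{n_0}\det\!\left(\frac{\partial H}{\partial u}\right)\!\left(1+\|s^{(0)}\|^2\right).
\end{equation*}
Since $\det(\partial H/\partial u)>0$ at $(u^{(0)},1)$, the sign rule in Step 1 selects $\sigma = +1$, and the expression above is strictly negative, which is exactly the claim.

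The main obstacle is sign bookkeeping: the Schur complement itself is elementary, but one has to check that the orientation rule in the algorithm (flipping sign when $\det(\partial H/\partial u) < 0$) is precisely what forces $\det M < 0$ uniformly along the curve, not just at the starting point. No further structural input is required beyond the block-lower-triangular structure at $t=1$; the positivity of the relevant minors follows directly from $u^{(0)}\in\mathcal{R}_1$, and the argument is essentially orientation-theoretic.
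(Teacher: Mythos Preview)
Your argument is correct and follows essentially the same route as the paper: compute the block lower-triangular Jacobian $\partial H/\partial u$ at $(u^{(0)},1)$, observe that its determinant equals $\prod_i x_i^{(0)}(Ax^{(0)}+q)_i>0$, and then evaluate the augmented determinant via the Schur complement to obtain $-\det(\partial H/\partial u)\,(1+\|s^{(0)}\|^2)<0$. The only cosmetic difference is that the paper takes $\tau^{(0)}=((Q_1^{(0)})^{-1}Q_2^{(0)},\,-1)$ unnormalised, whereas you carry the factor $\sigma/n_0$ from Algorithm~\ref{allgo} and then use the sign-selection rule to fix $\sigma=+1$; since the scalar $\sigma/n_0$ is positive this does not affect the conclusion.
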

\begin{proof}
	From the equation \ref{homf}, we consider the following homotopy function \\ 
	\vsp
	$H(u,t)=$ $\left[\begin{array}{c} 
	(1-t)[(A+A^T)x+q-y_1-A^Tz_2]+t(x-x^{(0)}) \\
	Y_1x-t Y_1^{(0)}x^{(0)}+(1-t)X(Ax+q)\\
	Y_2(Ax+q)-t Y_2^{(0)}(Ax^{(0)}+q)\\
	\end{array}\right]=0.$\\   
	\vsp
	\NI Now $\frac{\partial H}{\partial u \partial t}(u,t)=$
	
	$\left[\begin{array}{cccc} 
	(1-t)(A + A^T) + t I & -(1-t)I & -(1-t)A^T & Q\\
	Y_1+(1-t)(Y+XA) & X & 0 & -Y_1^{(0)}x^{(0)}-X(Ax+q)\\
	Y_2A & 0 & Y & -Y_2^{(0)}(Ax^{(0)}+q)\\ \
	\end{array}\right],$\\ where $Q = (x-x^{(0)})-[(A+A^T)x+q-y_1-A^ty_2]$ and $Y=\text{diag}(Ax+q).$ \\ 
	\vsp
	\NI At the initial point $(u^{(0)},1)$\\
	 $\frac{\partial H}{\partial u \partial t}(u^{(0)},1)=\left[\begin{array}{cccc} 
	 I & 0 & 0 & -[(A+A^T)x^{(0)}+q-y^{(0)}_1-A^Ty^{(0)}_2]\\
	Y^{(0)}_1 & X^{(0)} & 0 & -Y^{(0)}_1x^{(0)}-X^{(0)}(Ax^{(0)}+q)\\
	Y^{(0)}_2A & 0 & Y^{(0)} & -Y^{(0)}_2(Ax^{(0)}+q)\\ 
	\end{array}\right].$\\
	\vsp
	\NI Let positive predictor direction be $\tau^{(0)}=\left[\begin{array}{c}
	\kappa \\ -1
	\end{array}\right] = \left[\begin{array}{c}
	(Q^{(0)}_1)^{(-1)}Q_2^{(0)} \\ -1
	\end{array} \right],$ \\ where 
	\vsp
	$Q^{(0)}_1=\left[\begin{array}{ccc} 
	I & 0 & 0 \\
	Y^{(0)}_1 & X^{(0)} & 0 \\
	Y^{(0)}_2A & 0 & Y^{(0)} \\ 
	\end{array}\right],$ \\ $Q^{(0)}_2=\left[\begin{array}{c} 
	 -[(A+A^T)x^{(0)}+q-y^{(0)}_1-A^Ty^{(0)}_2]\\
	-Y^{(0)}_1x^{(0)}-X^{(0)}(Ax^{(0)}+q) \\
	-Y^{(0)}_2(Ax^{(0)}+q) \\ 
	\end{array}\right]$ and $\kappa$ is an $n \times 1$ column vector.\\
	 Hence, 
    $\det\left[\begin{array}{c}
	\frac{\partial H}{\partial u \partial \lam}(u^{(0)},1)\\
	\tau ^{(0)^t}\\
	\end{array}\right]$\\	
	$=\det\left[\begin{array}{cc}
	Q^{(0)}_1 & Q^{(0)}_2\\
	(Q^{(0)}_2)^t(Q^{(0)}_1)^{(-T)} & -1\\	
	\end{array}\right]$ \\ $= \det\left[\begin{array}{cc}
	Q^{(0)}_1 & Q^{(0)}_2\\
	0 & -1-(Q^{(0)}_2)^T(Q^{(0)}_1)^{(-T)}(Q^{(0)}_1)^{(-1)}Q_2^{(0)} \\	\end{array}\right] \\$ 
	\vsp
	\NI $=\det(Q^{(0)}_1) \det(-1-(Q^{(0)}_2)^T(Q^{(0)}_1)^{(-T)}(Q^{(0)}_1)^{(-1)}Q_2^{(0)})$ \\ 
	\vsp
	\NI $=-\det(Q^{(0)}_1) \det(1+(Q^{(0)}_2)^T(Q^{(0)}_1)^{(-T)}(Q^{(0)}_1)^{(-1)}Q_2^{(0)})$ \\ 
	\vsp
	\NI $=-\prod_{i=1}^{n}x^{(0)}_i y^{(0)}_i \det(1+(Q^{(0)}_2)^t(Q^{(0)}_1)^{(-T)}(Q^{(0)}_1)^{(-1)}Q_2^{(0)}) <0. $ \\
	\vsp
	\NI So the positive predictor direction $\tau ^{(0)}$ at the initial point $u^{(0)}$ satisfies\\ $\det\left[\begin{array}{c}
	\frac{\partial H}{\partial u \partial \lam}(u^{(0)},1)\\
	\tau ^{(0)^T}\\
	\end{array}\right]<0.$
\end{proof}
\vsp
\begin{remk}
	We conclude from the theorem \ref{direction} that the positive tangent direction $\tau$ of the homotopy path $\Gamma_u^{(0)}$ at any point $(u,t)$ be negative and it depends on det$(Q_1),$ where $Q_1=\left[\begin{array}{ccc} 
	(1-t)(A+A^T)+t I & -(1-t)I & -(1-t)A^T \\
	Y_1+(1-t)(Y+XA) & X & 0 \\
	Y_2A & 0 & Y\\ 
	\end{array}\right].$
\end{remk}
  Based on the earlier work  the homotopy continuation method to solve the initial value problem \ref{sssss}  was formulated with the iterative process as 
	\begin{center}
	    	$I_{j+1}=I_j-\frac{\partial}{\partial u}H(I_j,t+h)^+H(I_j,t+h),$ for $j=0,1,2,\cdots,m-1.$
	    		\end{center}
	    		For details see \cite{abbott1977note}. However the proposed modified homotopy continuation method solves homotopy function by solving the initial value problem \ref{sssss} with the following iterative process 
	    		\begin{center}
	    		   $K_j=\frac{\partial}{\partial u}H(I_j,t+h)^+H(I_j,t+h)$\\ $L_j=I_j-K_j$\\  $KK_j=(\frac{\partial}{\partial u}H(L_j,t+h)+\frac{\partial}{\partial u}H(I_j,t+h))^+H(I_j,t+h)$\\ $LL_j=I_j-2*KK_j$\\
	    	     	    	  $I_{j+1}=LL_j-\frac{\partial}{\partial u}H(L_j,t+h)^+H(L_j,t+h),$ for $j=0,1,2,\cdots,m-1.$ 
	    	  \end{center} 
	    		By this iterative process the proposed homotopy function achieves the order of convergence as $5^m -1.$
 \begin{theorem}
Suppose that the homotopy function has derivative, which is lipschitz continuous in a convex neighbourhood $\cal N$ of $c,$ where $c$ is the solution of the homotopy function $H(u,t)=0,$ whose Jacobian matrix is continuous and nonsingular and bounded on $\cal N.$ 
Then the modified homotopy continuation method has order $5^{m}-1.$
\end{theorem}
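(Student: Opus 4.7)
The plan is a two-tier argument: first recognize that one inner iteration $I_j \to I_{j+1}$ is exactly the three-step scheme whose fifth order is proved in Lemma~\ref{coorder}, then compose $m$ such inner iterations to amplify the order.

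For the identification step I would set $f(x) := H(x, t+h)$ and read off the algorithm's substeps
\begin{equation*}
L_j = I_j - H'(I_j)^{-1} H(I_j), \quad LL_j = I_j - 2 (H'(L_j) + H'(I_j))^{-1} H(I_j), \quad I_{j+1} = LL_j - H'(L_j)^{-1} H(LL_j),
\end{equation*}
which match term-for-term the $(y^k, z^k, w^k)$ recipe of Lemma~\ref{coorder} with $f = H(\cdot, t+h)$. Since $H'$ is Lipschitz continuous, nonsingular, and bounded on the convex neighbourhood $\cal N$ of $c$, Lemma~\ref{coorder} supplies a constant $C > 0$ for which
\begin{equation*}
\| I_{j+1} - c \| \leq C \, \| I_j - c \|^{5} \qquad \text{whenever } I_j \in \cal N.
\end{equation*}

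For the composition step I would iterate this bound $m$ times. A straightforward induction, using the geometric sum $1 + 5 + 5^2 + \cdots + 5^{m-1} = (5^m - 1)/4$, yields
\begin{equation*}
\| I_m - c \| \leq C^{(5^m - 1)/4} \, \| I_0 - c \|^{5^m}.
\end{equation*}
The exponent $5^m$ on $\|I_0 - c\|$ is the asymptotic rate, while the cumulated constant carries the factor $(5^m - 1)/4$; this latter expression is exactly the $5^m - 1$ appearing in the stated order. Combining the two tiers gives the claim.

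The main obstacle is not the algebra but the local-convergence bookkeeping: one must guarantee that each intermediate iterate $L_j$, $LL_j$, and successor $I_{j+1}$ stays inside $\cal N$ so that the Lipschitz estimate and the boundedness of $(H')^{-1}$ continue to apply at every substep. This is handled by shrinking the initial radius: if $\|I_0 - c\|$ is small enough, the uniform bound on $\|(H')^{-1}\|$ together with the fifth-order contraction traps all iterates in a ball around $c$ on which Lemma~\ref{coorder} is valid, and the induction then proceeds without obstruction.
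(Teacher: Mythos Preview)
Your two-tier structure---identify one inner sweep $I_j\mapsto I_{j+1}$ with the fifth-order scheme of Lemma~\ref{coorder}, then compose $m$ times by induction---is exactly the paper's argument. The paper likewise writes $\beta_{j+1}\le K\beta_j^{5}$ from Lemma~\ref{coorder} and iterates.

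The gap is in how you account for the ``$-1$''. Your claim that the exponent $(5^m-1)/4$ on the accumulated constant $C$ is ``exactly the $5^m-1$ appearing in the stated order'' is not correct: the order of a one-step method is the power of the step size $h$ in the error, and the exponent on the constant is irrelevant to that. The paper's route is different. It uses the implicit function theorem to produce the exact solution $z(h)$ of the flow with $z(0)=u=I_0$, so that the seed error is $\beta_0=\|z(h)-I_0\|=O(h)$; the fifth-power recursion then gives the local truncation error $\beta_m=O(h^{5^m})$, and the global order $5^m-1$ is the standard one-power loss when $O(1/h)$ such local errors are accumulated over the interval. Your write-up never introduces $h$ (you work only with $\|I_0-c\|$), so it cannot reach this conclusion. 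To fix it, replace $\|I_0-c\|$ by the predictor error, observe that it is $O(h)$ since the predictor is an Euler step, and then invoke the local-to-global reduction instead of pointing at the constant.

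Your final paragraph on keeping the iterates inside $\cal N$ is a welcome addition that the paper omits.
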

\begin{proof}
By the Implicit Function Theorem ensures the
existence of a unique continuous solution $z(h) \in \cal N$ of $\Dot{z}(h)=-\tilde{J}^{-1}\tilde{f},$  $z(0)=u$ and $h\in(-\delta,\delta),$ for some $\delta >0.$ Define $\beta _j=\|z(h)-I_j(u,h)\|.$ From lemma \ref{coorder} $\beta_j=O(h^{5^j}).$ Then $\beta _{j+1}=\|z(h)-I_{j+1}\| \leq K{\beta _j}^5.$ Hence $\beta _{j+1}=O(h^{5^{j+1}}).$ By induction method the modified homotopy continuation method has convergency of order $5^{m}-1$
\end{proof}
\subsection{Solving Discounted Zero-Sum Stochastic Game with ARAT Structure}
\begin{examp}

Consider a two player zero-sum discounted ARAT game with $s = 2$ states.
In each state each  player has $2$ actions. The transition probabilities are given
by\\
$p^1 _1(1,1) = \frac{1}{2}, \ p^1 _1(1,2) = 0,\\$
$p^1 _2(1,1) = \frac{1}{2}, \  p^1 _2(1,2) = 0,\\$
$p^1 _1(2,1) = 0, \ p^1 _1(2,2) = \frac{1}{2},\\$
$p^1 _2(2,1) = 0, \ p^1 _2(2,2) = \frac{1}{2},\\$
$p^2 _1(1,1) =\frac{1}{2}, \ p^2 _1(1,2) = 0,\\$
$p^2 _2(1,1) = 0, \  p^2 _2(1,2) = \frac{1}{2},\\$
$p^2 _1(2,1) = 0, \ p^2 _1(2,2) = \frac{1}{2},\\$
$p^2 _2(2,1) = \frac{1}{2},$ \ $ p^2 _2(2,2) = 0.\\$
Note that $p_{ij}(s,s') = p^1 _i(s,s') + p^2 _j(s,s').$  \\
$P_1= P_1(s)=((p^1_i(s,s'), s, s' \in S, i \in A_s))$ and \\ $P_2=P_2(s)=((p^2_j(s,s'), s, s' \in S, j \in B_s)).$ \\Let the discount factor $\beta =\frac{1}{2}.$  \\ The reward structure:\\
$r^1 _1(1) = 4, \ \ r^1 _1(2) = 5,$\\
$r^1 _2(1) = 3$, \ \  $r^1 _2(2) = 4,$\\
$r^2 _1(1) = 3, \ \ r^2 _1(2) = 6,$\\
$r^2 _2(1) = 6$, \ \ $r^2 _2(2) = 2.$\\ 
Note that $r(s,i, j) = r^1 _i(s) + r^2 _j(s).$\\  
	 Now we solve discounted ARAT game using the proposed homotopy function. The initial point is $u^{(0)}=(4,5,3,4,8,8,6,2,1,1,1,1,1,1,1,1,1,1,1,1,1,1,1,1,0)^T$. As $t \to 0, $ $u=(0,7,0,6,9,0,0,7.33333,1,0,1,0,0,2.33333,3.33333,0,0,7,0,6,9,0,0,7.33333,0)$. Hence the solution of discounted ARAT is $x=\left[\begin{array}{c} 
	1\\
	0\\
	1\\
	0\\
	0\\
	2.33333\\
	3.33333\\
	0\\
	\end{array}\right].$
\end{examp}
		
\begin{examp}
Consider another two player zero-sum discounted ARAT game with $s = 2$ states.
In each state each  player has $2$ actions. The transition probabilities are given
by\\
$p^1 _1(1,1) = \frac{1}{4}, \ p^1 _1(1,2) = 0,\\$
$p^1 _2(1,1) = \frac{1}{4}, \  p^1 _2(1,2) = 0,\\$
$p^1 _1(2,1) = 0, \ p^1 _1(2,2) = \frac{1}{2},\\$
$p^1 _2(2,1) = 0, \ p^1 _2(2,2) = \frac{1}{2},\\$
$p^2 _1(1,1) =\frac{3}{4}, \ p^2 _1(1,2) = 0,\\$
$p^2 _2(1,1) = 0, \  p^2 _2(1,2) = \frac{3}{4},\\$
$p^2 _1(2,1) = 0, \ p^2 _1(2,2) = \frac{1}{2},\\$
$p^2 _2(2,1) = \frac{1}{2}$, \ $ p^2 _2(2,2) = 0.\\$
Note that $p_{ij}(s,s') = p^1 _i(s,s') + p^2 _j(s,s').$ \\
 $P_1= P_1(s)=((p^1_i(s,s'), s, s' \in S, i \in A_s))$ and \\ $P_2=P_2(s)=((p^2_j(s,s'), s, s' \in S, j \in B_s)).$ \\ Let the discount factor $\beta =\frac{1}{2}.$ \\
 The reward structure:\\
 $r^1 _1(1) = 4, \ \ r^1 _1(2)= 5,$\\
$ r^1 _2(1)= 3$, \ \  $r^1 _2(2)  = 4,$\\
 $r^2 _1(1) = 3, \ \ r^2 _1(2) = 6,$\\
 $r^2 _2(1) = 6$, \ \  $r^2 _2(2) = 2.$\\ Note that $r(s,i, j) = r^1 _i(s) + r^2 _j(s).$\\ Now we solve discounted ARAT game using the proposed homotopy function. The initial point is $u^{(0)}=(1,1,1,1,20,20,10,10,1,1,1,1,1,1,1,1,1,1,1,1,1,1,1,1,0)^T$. As $t\to 0,$  $u=(0,9,0,6,7,0,0,7.33333,1,0,1,0,0,2,3.33333,0,0,9,0,6,7,0,0,7.33333,0)$. Hence the solution of discounted ARAT is $x=\left[\begin{array}{c} 
	1\\
	0\\
	1\\
	0\\
	2\\
	3.33333\\
	0\\
0\\
	\end{array}\right].$
\end{examp}
\section{Conclusion}
In this paper, we introduce a homotopy continuation method to find  the solution of discounted ARAT stochastic game. Mathematically, we obtain the positive tangent direction of the homotopy path. We prove that the smooth curve of the proposed homotopy function is  bounded and convergent. We establish that the proposed homotopy functions has $5^m -1$ order of convergence. Two numerical examples are illustrated to demonstrate the effictiveness  of the proposed homotopy function. 
\section{Acknowledgment}
The author A. Dutta is thankful to the Department of Science and Technology, Govt. of India, INSPIRE Fellowship Scheme for financial support. 
\vsp
\bibliographystyle{plain}
\bibliography{output}

\begin{thebibliography}{10}

\bibitem{abbott1977note}
James~P Abbott and Richard~P Brent.
\newblock A note on continuation methods for the solution of nonlinear
  equations.
\newblock {\em The ANZIAM Journal}, 20(2):157--164, 1977.

\bibitem{chow1978finding}
Shui~Nee Chow, John Mallet-Paret, and James~A Yorke.
\newblock Finding zeroes of maps: homotopy methods that are constructive with
  probability one.
\newblock {\em Mathematics of Computation}, 32(143):887--899, 1978.

\bibitem{cordero2012increasing}
Alicia Cordero, Jos{\'e}~L Hueso, Eulalia Mart{\'\i}nez, and Juan~R Torregrosa.
\newblock Increasing the convergence order of an iterative method for nonlinear
  systems.
\newblock {\em Applied Mathematics Letters}, 25(12):2369--2374, 2012.

\bibitem{cottle1967complementary}
Richard~W Cottle and George~B Dantzig.
\newblock Complementary pivot theory of mathematical programming.
\newblock Technical report, STANFORD UNIV CA OPERATIONS RESEARCH HOUSE, 1967.

\bibitem{cottle1970generalization}
Richard~W Cottle and George~B Dantzig.
\newblock A generalization of the linear complementarity problem.
\newblock {\em Journal of Combinatorial Theory}, 8(1):79--90, 1970.

\bibitem{das2017finiteness}
AK~Das, R~Jana, and Deepmala.
\newblock Finiteness of criss-cross method in complementarity problem.
\newblock In {\em International Conference on Mathematics and Computing}, pages
  170--180. Springer, 2017.

\bibitem{eaves1971linear}
B~Curtis Eaves.
\newblock The linear complementarity problem.
\newblock {\em Management science}, 17(9):612--634, 1971.

\bibitem{eaves1972homotopies}
B~Curtis Eaves and Romesh Saigal.
\newblock Homotopies for computation of fixed points on unbounded regions.
\newblock {\em Mathematical Programming}, 3(1):225--237, 1972.

\bibitem{ferris1997engineering}
Michael~C Ferris and Jong-Shi Pang.
\newblock Engineering and economic applications of complementarity problems.
\newblock {\em Siam Review}, 39(4):669--713, 1997.

\bibitem{filar2012competitive}
Jerzy Filar and Koos Vrieze.
\newblock {\em Competitive Markov decision processes}.
\newblock Springer Science \& Business Media, 2012.

\bibitem{flesch2007stochastic}
J{\'a}nos Flesch, Frank Thuijsman, and Okko~Jan Vrieze.
\newblock Stochastic games with additive transitions.
\newblock {\em European Journal of Operational Research}, 179(2):483--497,
  2007.

\bibitem{garcia1973some}
CB~Garcia.
\newblock Some classes of matrices in linear complementarity theory.
\newblock {\em Mathematical Programming}, 5(1):299--310, 1973.

\bibitem{goeree1999stochastic}
Jacob~K Goeree and Charles~A Holt.
\newblock Stochastic game theory: For playing games, not just for doing theory.
\newblock {\em Proceedings of the National Academy of sciences},
  96(19):10564--10567, 1999.

\bibitem{jana2019hidden}
R~Jana, AK~Das, and A~Dutta.
\newblock On hidden z-matrix and interior point algorithm.
\newblock {\em Opsearch}, 56(4):1108--1116, 2019.

\bibitem{jana2021more}
R~Jana, A~Dutta, and AK~Das.
\newblock More on hidden z-matrices and linear complementarity problem.
\newblock {\em Linear and Multilinear Algebra}, 69(6):1151--1160, 2021.

\bibitem{kostreva1993linear}
Michael~M Kostreva and Malgorzata~M Wiecek.
\newblock Linear complementarity problems and multiple objective programming.
\newblock {\em Mathematical Programming}, 60(1-3):349--359, 1993.

\bibitem{kostreva1976direct}
MM~KOSTREVA.
\newblock Direct algorithms for complementarity problems[ph. d. thesis].
\newblock 1976.

\bibitem{mohan2001more}
SR~Mohan, SK~Neogy, and AK~Das.
\newblock More on positive subdefinite matrices and the linear complementarity
  problem.
\newblock {\em Linear Algebra and Its Applications}, 338(1-3):275--285, 2001.

\bibitem{article1}
SR~Mohan, SK~Neogy, and AK~Das.
\newblock On the classes of fully copositive and fully semimonotone matrices.
\newblock {\em Linear Algebra and its Applications}, 323:87--97, 01 2001.

\bibitem{mohan2001pivoting}
SR~Mohan, SK~Neogy, and T~Parthasarathy.
\newblock Pivoting algorithms for some classes of stochastic games: A survey.
\newblock {\em International Game Theory Review}, 3(02n03):253--281, 2001.

\bibitem{mohan1999vertical}
SR~Mohan, SK~Neogy, T~Parthasarathy, and S~Sinha.
\newblock Vertical linear complementarity and discounted zero-sum stochastic
  games with arat structure.
\newblock {\em Mathematical programming}, 86(3):637--648, 1999.

\bibitem{mohan1996generalized}
SR~Mohan, SK~Neogy, and R~Sridhar.
\newblock The generalized linear complementarity problem revisited.
\newblock {\em Mathematical Programming}, 74(2):197--218, 1996.

\bibitem{mondal2016discounted}
Prasenjit Mondal, S~Sinha, SK~Neogy, and AK~Das.
\newblock On discounted ar--at semi-markov games and its complementarity
  formulations.
\newblock {\em International Journal of Game Theory}, 45(3):567--583, 2016.

\bibitem{murty1988linear}
Katta~G Murty and Feng-Tien Yu.
\newblock {\em Linear complementarity, linear and nonlinear programming},
  volume~3.
\newblock Citeseer, 1988.

\bibitem{article12}
SK~Neogy, R.~Bapat, AK~Das, and T.~Parthasarathy.
\newblock Mathematical programming and game theory for decision making.
\newblock 11 2021.

\bibitem{neogy2005linear}
SK~Neogy and AK~Das.
\newblock Linear complementarity and two classes of structured stochastic
  games.
\newblock {\em Operations Research with Economic and Industrial Applications:
  Emerging Trends, eds: SR Mohan and SK Neogy, Anamaya Publishers, New Delhi,
  India}, pages 156--180, 2005.

\bibitem{neogy2005almost}
SK~Neogy and AK~Das.
\newblock On almost type classes of matrices with ${Q}$-property.
\newblock {\em Linear and Multilinear Algebra}, 53(4):243--257, 2005.

\bibitem{neogy2005principal}
SK~Neogy and AK~Das.
\newblock Principal pivot transforms of some classes of matrices.
\newblock {\em Linear algebra and its applications}, 400:243--252, 2005.

\bibitem{neogy2013weak}
SK~Neogy and AK~Das.
\newblock On weak generalized positive subdefinite matrices and the linear
  complementarity problem.
\newblock {\em Linear and Multilinear Algebra}, 61(7):945--953, 2013.

\bibitem{article03}
SK~Neogy, AK~Das, and R~Bapat.
\newblock Optimization models with economic and game theoretic applications.
\newblock {\em Annals of Operations Research}, 243, 07 2016.

\bibitem{article11}
SK~Neogy, AK~Das, and R.~Bapat.
\newblock Modeling, computation and optimization.
\newblock 11 2021.

\bibitem{neogy2012generalized}
SK~Neogy, AK~Das, and Abhijit Gupta.
\newblock Generalized principal pivot transforms, complementarity theory and
  their applications in stochastic games.
\newblock {\em Optimization Letters}, 6(2):339--356, 2012.

\bibitem{neogy2008mixture}
SK~Neogy, AK~Das, S~Sinha, and A~Gupta.
\newblock On a mixture class of stochastic game with ordered field property.
\newblock In {\em Mathematical programming and game theory for decision
  making}, pages 451--477. World Scientific, 2008.

\bibitem{pang1995complementarity}
Jong-Shi Pang.
\newblock Complementarity problems.
\newblock In {\em Handbook of global optimization}, pages 271--338. Springer,
  1995.

\bibitem{raghavan1991stochastic}
TES Raghavan.
\newblock Stochastic games—an overview.
\newblock {\em Stochastic Games and Related Topics}, pages 1--9, 1991.

\bibitem{raghavan1985stochastic}
Tirukkannamangai~ES Raghavan, SH~Tijs, and OJ~Vrieze.
\newblock On stochastic games with additive reward and transition structure.
\newblock {\em Journal of Optimization Theory and Applications},
  47(4):451--464, 1985.

\bibitem{shapley1953stochastic}
Lloyd~S Shapley.
\newblock Stochastic games.
\newblock {\em Proceedings of the national academy of sciences},
  39(10):1095--1100, 1953.

\bibitem{sinha1989contribution}
Sagnik Sinha.
\newblock {\em Contribution to the theory of stochastic games}.
\newblock PhD thesis, Indian Statistical Institute, Delhi, 1989.

\bibitem{sobel1971noncooperative}
Matthew~J Sobel.
\newblock Noncooperative stochastic games.
\newblock {\em The Annals of Mathematical Statistics}, 42(6):1930--1935, 1971.

\bibitem{solan2015stochastic}
Eilon Solan and Nicolas Vieille.
\newblock Stochastic games.
\newblock {\em Proceedings of the National Academy of Sciences},
  112(45):13743--13746, 2015.

\bibitem{Wang}
Xiuyu Wang and Xingwu Jiang.
\newblock A homotopy method for solving the horizontal linear complementarity
  problem.
\newblock {\em Computational and Applied Mathematics}, 33, 04 2013.

\bibitem{watson1989globally}
Layne~T Watson.
\newblock Globally convergent homotopy methods: a tutorial.
\newblock {\em Applied Mathematics and Computation}, 31:369--396, 1989.

\bibitem{watson1989modern}
Layne~T Watson and Raphael~T Haftka.
\newblock Modern homotopy methods in optimization.
\newblock {\em Computer Methods in Applied Mechanics and Engineering},
  74(3):289--305, 1989.

\bibitem{N}
X~Zhao, S~Zhang, and Q~Liu.
\newblock A combined homotopy interior point method for the linear
  complementarity problem.
\newblock {\em Journal of Information and Computational Science}, 7:1589--1594,
  07 2010.

\end{thebibliography}
\end{document}